\documentclass[11pt,leqno]{article}
\usepackage{amssymb,amsmath,amsthm}
\usepackage[all,2cell,ps]{xy}
\usepackage[alphabetic,lite]{amsrefs}
\numberwithin{equation}{section}

\setcounter{tocdepth}{2}
\usepackage[top=2cm,bottom=2cm,left=3cm,right=3cm,marginparsep=0.3cm,marginparwidth=3cm,includefoot]{geometry}

\usepackage[colorlinks=true, pdfstartview=FitV, linkcolor=blue, citecolor=blue, urlcolor=blue]{hyperref}

\usepackage{enumerate}
\usepackage{mathrsfs}


\numberwithin{equation}{section}

\usepackage{enumerate}
\usepackage{mathrsfs}

\newcommand{\spa}{\vspace{0.5ex}\noindent}

\newenvironment{rouge}
{\relax\color{red}}
{\hspace*{.3ex}\relax}
\newcommand{\ber}{\begin{rouge}}
\newcommand{\er}{\end{rouge}}

\hyphenation{Gro-then-dieck}
\hyphenation{micro-differential}

\newcommand{\nc}{\newcommand}
\nc{\on}{\operatorname}



\newtheorem{theorem}{Theorem}[section]
\newtheorem{proposition}[theorem]{Proposition}
\newtheorem{lemma}[theorem]{Lemma}
\newtheorem{corollary}[theorem]{Corollary}

\theoremstyle{definition}

\newtheorem{definition}[theorem]{Definition}
\newtheorem{notation}[theorem]{Notation}
\newtheorem{example}[theorem]{Example}

\nc{\Prop}{\begin{proposition}}
\nc{\enprop}{\end{proposition}}
\nc{\Lemma}{\begin{lemma}}
\nc{\enlemma}{\end{lemma}}
\nc{\Cor}{\begin{corollary}}
\nc{\encor}{\end{corollary}}
\nc{\Def}{\begin{definition}}
\nc{\enDef}{\end{definition}}
\nc{\Th}{\begin{theorem}}
\nc{\entheorem}{\end{theorem}}

\newcommand{\C}{{\mathbb{C}}}

\newcommand{\R}{{\mathbb{R}}}

\newcommand{\Z}{{\mathbb{Z}}}

\newcommand{\cor}{{\bf k}}

\def\phi{{\varphi}}
\def\epsilon{\varepsilon}
\def\muhom{\mu hom}



\def\sha{\mathscr{A}}

\def\shb{\mathscr{B}}
\def\shc{\mathscr{C}}
\def\shd{\mathscr{D}}
\def\she{\mathscr{E}}

\def\shi{\mathscr{I}}

\def\shl{\mathscr{L}}
\def\shm{\mathscr{M}}

\def\sho{\mathscr{O}}

\def\shr{\mathscr{R}}
\def\shs{\mathscr{S}}

\newcommand{\ol}{\overline}


\newcommand{\lp}{{\rm(}}
\newcommand{\rp}{{\rm)}}

\nc{\RR}{\mathrm{R}}
\nc{\LL}{\mathrm{L}}

\newcommand{\into}{\hookrightarrow}

\nc{\CH}{\on{H}}

\newcommand{\HWo}[1][X]{\widehat{\mathscr{W}}_{#1}(0)}

\newcommand{\A}[1][X]{\mathscr{A}_{{#1}}}
\newcommand{\Ah}[1][X]{\mathscr{A}_{{#1}}^{\rm loc}}
\nc{\DA}[1][X]{\mathscr{D}^{\mathscr{A}}_{#1}}
\nc{\Dh}[1][X]{\mathscr{D}_{#1}[[\hbar]]}
\nc{\Dhh}[1][X]{\mathscr{D}_{#1}[\hbar]}
\nc{\Dhhh}[1][X]{\mathscr{D}_{#1}[\hbar,\opb{\hbar}]}

\nc{\Dhl}[1][X]{\shd_{#1}\Ls}
\nc{\Dho}[1][X]{\mathscr{D}_{#1}^\hbar(0)}
\nc{\Oh}[1][X]{\sho_{#1}^\hbar}
\nc{\OOh}[1][X]{\sho_{#1}[[\hbar]]}
\nc{\oh}[1][X]{\Omega_{#1}[[\hbar]]}

\nc{\Ohl}[1][X]{\sho_{#1}^{\hbar,\loc}}
\nc{\ohh}[1][X]{\Omega_{#1}^\hbar}

\newcommand{\gr}{\mathrm{gr}_\hbar}

\newcommand{\Lie}[1][]{\operatorname{\mathsf{L}}\def\temp{#1}
\ifx\temp\empty\else^{(#1)}\fi}





\newcommand{\stkHom}[1][]{\mathfrak{Hom}_{\raise1.5ex\hbox to.1em{}#1}}







\newcommand{\Int}{{\rm Int}}
\newcommand{\loc}{{\rm loc}}

\renewcommand{\to}[1][]{\xrightarrow[]{#1}}

\newcommand{\isoto}[1][]{\xrightarrow[#1]%
{{\raisebox{-.6ex}[0ex][-.6ex]{$\mspace{1mu}\sim\mspace{2mu}$}}}}



\newcommand{\Hom}[1][]{\mathrm{Hom}_{\raise1.5ex\hbox to.1em{}#1}}
\newcommand{\RHom}[1][]{\RR\mathrm{Hom}_{\raise1.5ex\hbox to.1em{}#1}}
\newcommand{\Ext}[2][]{\mathrm{Ext}_{\raise1.5ex\hbox to.1em{}#1}^{#2}}
\renewcommand{\hom}[1][]{{\mathscr{H}\mspace{-4mu}om}_{\raise1.5ex\hbox to.1em{}#1}}
\newcommand{\rhom}[1][]{{\RR\mathscr{H}\mspace{-3mu}om}_{\raise1.5ex\hbox to.1em{}#1}}
\newcommand{\ext}[2][]{{\mathscr{E}\mspace{-2mu}xt}_{%
\raise1.5ex\hbox to.1em{}#1}^{#2}}
\def\endo{{\she \kern-.2ex nd}}

\newcommand{\Tens}[1][]{\mathbin{\otimes_{\raise1.5ex\hbox to-.1em{}{#1}}}}
\newcommand{\LTens}[1][]{\mathbin{\otimes_{\raise1.5ex\hbox to-.1em{}#1}^{L}}}
\newcommand{\Tor}[2][]{\mathrm{Tor}^{\raise1.5ex\hbox to.1em{}#1}_{#2}}
\newcommand{\tens}[1][]{\mathbin{\otimes_{\raise1.5ex\hbox to-.1em{}{#1}}}}

\newcommand{\dtens}[1][]%
{{\overset{\mathrm{L}}{\underline{\otimes}}}_{#1}}

\nc{\aut}{{\sha\mspace{-1mu}\mit{ut}}\,}

\newcommand{\Endo}[1][]{\mathrm{End}_{\raise1.5ex\hbox to.1em{}#1}}

\newcommand{\Aut}[1][]{\mathrm{Aut}_{\raise1.5ex\hbox to.1em{}#1}}
\newcommand{\sect}{\Gamma}
\newcommand{\rsect}{\mathrm{R}\Gamma}

\newcommand{\Rb}{{\rm b}}

\newcommand{\SSi}{\on{SS}}
\DeclareMathOperator{\musupp}{\mu supp}

\newcommand{\oim}[1]{{#1}_*}
\newcommand{\eim}[1]{{#1}_!}
\newcommand{\roim}[1]{\RR{#1}_*}
\newcommand{\reim}[1]{\RR{#1}_!}
\newcommand{\opb}[1]{#1^{-1}}
\newcommand{\epb}[1]{#1^{!}}

\newcommand{\dopb}[1]{#1^{D}}

\newcommand{\tw}[1]{\widetilde{#1}}


\DeclareMathOperator{\supp}{supp}
\DeclareMathOperator{\ori}{or}
\DeclareMathOperator{\chv}{char}

\DeclareMathOperator{\hchv}{hypchar}

\def\rop{{\rm op}}



\newcommand{\Pro}{\mathrm{Pro}}

\newcommand{\eqdot}{\mathbin{:=}}

\newcommand{\cl}{\colon}
\newcommand{\scbul}{{\,\raise.4ex\hbox{$\scriptscriptstyle\bullet$}\,}}

\def\suba#1{{\raise-.2em\hbox{${\raise .2em\hbox{$\alpha$}}_{#1}$}}}


\newcommand{\ba}{\begin{array}}
\newcommand{\ea}{\end{array}}

\newcommand{\bnum}{\begin{enumerate}[{\rm(i)}]}
\newcommand{\enum}{\end{enumerate}}
\newcommand{\banum}{\begin{enumerate}[{\rm(a)}]}
\newcommand{\eanum}{\end{enumerate}}

\newcommand{\eq}{\begin{eqnarray}}
\newcommand{\eneq}{\end{eqnarray}}
\newcommand{\eqn}{\begin{eqnarray*}}
\newcommand{\eneqn}{\end{eqnarray*}}

\nc{\Der}{\on{D}}

\nc{\Proof}{\begin{proof}}
\nc{\QED}{\end{proof}}
\nc{\bA}[1][X]{\gr({\sha}_{#1})}
\nc{\OA}[1][X]{\Omega_{#1}^{\,\mathscr{A}}}
\nc{\oA}[1][X]{\omega_{#1}^{\,\mathscr{A}}}
\nc{\omA}[1][X]{\omega_{#1}}
\nc{\oAI}[1][X]{\omega_{#1}^{\,\mathscr{A}{\otimes-1}}}
\nc{\omAI}[1][X]{\omega_{#1}^{{\otimes-1}}}
\nc{\oAh}[1][X]{\omega_{#1}^{\,\Ah[]}}
\nc{\oo}[1][X]{\omega_{#1}}
\nc{\Derb}{\mathrm{D}^{\mathrm{b}}}
\nc{\hs}{\hspace*}
\nc{\Supp}{\on{Supp}}
\nc{\tr}{\on{tr}}

\newcommand{\HHWo}[1][X]{\mathcal{HH}(\HWo[])}

\newcommand{\RD}{{\rm D}}
\nc{\RDAA}[1][X]{\mathrm{D}^\prime_{\mathscr{A}_{#1}}}
\nc{\RDA}{\mathrm{D}^\prime_{\mathscr{A}}}
\nc{\RDAh}{\mathrm{D}^\prime_{\mathscr{A}^{\rm loc}}}
\nc{\RDO}{\mathrm{D}^\prime_{\mathscr{O}}}
\nc{\RDOl}{\mathrm{D}^\prime_{\mathscr{O}^{\hbar,\rm loc}}}
\nc{\RDDO}{\mathrm{D}_{\mathscr{O}}}
\nc{\RDDA}{\mathrm{D}_{\mathscr{A}}}
\nc{\RDD}{\mathrm{D}^\prime}
\nc{\conv}[1][]{\mathop{\circ}\limits_{#1}}
\nc{\sconv}[1][]{\mathop{\ast}\limits_{#1}}

\nc{\ssum}{\mathop{\mbox{\normalsize$\sum$}}}
\nc{\de}[1][X]{\delta_{#1}} 
\nc{\vs}{\vspace}
\nc{\dA}[1][X]{\mathscr{C}_{{#1}}}
\nc{\dO}[1][X]{{{\delta_{{#1}}}_*\mspace{1mu}\mathscr{O}_{{#1}}}}
\nc{\dGA}[1][X]{\gr(\mathscr{C}_{{#1}})}

\nc{\OS}[1][X]{\sho_{#1}}
\nc{\soplus}{\mathop{\text{\scriptsize\raisebox{.5ex}{$\displaystyle\bigoplus$}}}}
\nc{\Inv}{\on{Inv}}
\nc{\stkInv}{\mathfrak{Inv}}
\nc{\bwr}{{\mbox{\large$\wr$}}}
\nc{\forl}{[\mspace{-.3mu}[\hbar]\mspace{-.3mu}]}
\nc{\Ls}{(\mspace{-.3mu}(\hbar)\mspace{-.3mu})}

\nc{\be}{\begin{enumerate}}
\nc{\ee}{\end{enumerate}}
\nc{\stan}{\mathrm{stan}}
\nc{\Db}{\RD^\Rb}
\nc{\pt}{\mathrm{pt}}
\nc{\BBD}{\mathbb{D}}
\nc{\rC}{\mathrm{C}}
\nc{\scup}{\mathop{\text{\scriptsize\raisebox{.5ex}{$\displaystyle\bigcup$}}}}
\nc{\tX}{{\widetilde{X}}}
\nc{\AL}{\A[\Lambda/X]}
\nc{\ALa}{\A[\Lambda^a]}
\nc{\Gr}{\on{Gr}}
\nc{\CL}{\on{\mathrm{C}_\Lambda}}
\nc{\codim}{\on{codim}}
\nc{\Chl}{\on{char_{\Lambda}}}
\nc{\Chlo}{\on{char_{\Lambda_0}}}
\nc{\ChM}{\on{char_{M}}}
\nc{\DL}{\shd_\shl}
\nc{\DLl}{\shd_\shl^\loc}
\nc{\rmC}{\rm C}
\nc{\spec}{\rm spec}
\nc{\WF}{\rm WF}

\nc{\Zh}{\Z\forl}
\nc{\PZ}{\Pro(\Zh)}
\nc{\coco}{{cohomologically complete}}
\nc{\rpi}{{{\rm R}\pi}}
\nc{\shal}{{\sha^\loc}}

\nc{\Ran}{{\rm Ran}\, X}

\nc{\cc}{cohomologically complete}
\nc{\shrl}{{\shr^\loc}}
\newcommand{\Sol}{{\shs\mspace{-2.5mu}\mathit{ol}}}

\begin{document}

\title
{Wick rotation for  D-modules}
\author{Pierre Schapira}
\maketitle

\begin{abstract}
We extend the classical Wick rotation to D-modules and higher codimensional submanifolds.
\end{abstract}
{\renewcommand{\thefootnote}{\mbox{}}
\footnote{Key words: causal manifolds, microlocal sheaf theory, hyperbolic D-modules, hyperfunctions, Wick rotation}
\footnote{MSC: 35A27, 58J15, 58J45, 81T20}
\footnote{Research supported by the ANR-15-CE40-0007 ``MICROLOCAL''.}
}

\section{Introduction}
Let $M$ be a real analytic manifold of the type $N\times\R$ and let  $X=Y\times\C$ be a complexification of $M$.  Consider a differential operator $P$ on $X$ such that $P$ is hyperbolic on $M$ with respect to the direction $N\times\{0\}$, a typical example being  the wave operator on a spacetime. 
Denote by $L$ the real manifold $N\times\sqrt{-1}\R$. It may happen, and it happens for the wave operator, that $P$ is elliptic on $L$. Passing from $M$ to $L$ is called the Wick rotation by physicists who deduce interesting properties of $P$ on $M$ from the study of $P$ on  $L$. 

In the situation  above, we had $codim_M N = codim_L N = 1$. In this paper, we treat the general case of two real analytic manifolds $M$ and $L$ in $X$, $X$ being a complexification of both $M$ and $L$, such that the intersection $N\eqdot M\cap L$ is clean, 
and we consider a coherent $\shd_X$-module $\shm$ which is hyperbolic with respect to $M$ on $N$ and elliptic on $L$. The main result is Theorem~\ref{th:wick} which describes an isomorphism  in a neighborhood of $N$
between the complex of hyperfunction solutions of $\shm$ on $L$ defined in a given cone  $\gamma\subset T_NL$ and  the complex of  hyperfunction solutions of $\shm$ on $M$ with wave front set in a  cone $\lambda\subset T^*_MX$ associated with $\gamma$. It is also proved that this isomorphism is compatible with the boundary values morphism from $M$ to $N$ and from $L$ to $N$.

\spa
{\bf Aknowledgements} This paper was initiated by a series of discussions with Christian G{\'e}rard who kindly explained us some 
problems associated with the classical Wick rotation (see~\cite{GW17}). We sincerely thank him for his patience and his explanations. 

\section{Sheaves, D-modules and wave front sets}
\subsection{Sheaves}
We shall use the microlocal theory of sheaves of~\cite{KS90} and mainly follow its terminology. For the reader's convenience, we recall a few notations and results. 

\subsubsection*{Geometry}
Let $X$ be a real manifold of class $C^\infty$. For  a subset $A\subset X$, we denote by $\ol A$ its closure and by $\Int(A)$ its interior. 
We denote by 
\eqn
&& \tau_X\cl TX\to X, \quad \pi_X\cl T^*X\to X
\eneqn
the tangent bundle and the cotangent bundle to $X$. 
For a closed submanifold $M$ of $X$, we denote by $\tau_M\cl T_MX\to M$ and $\pi_M\cl T^*_MX\to M$ the normal bundle and the conormal bundle to $M$ in $X$. In particular, $T^*_XX$ is the zero-section of $T^*X$, that we identify with $X$.  

For a vector bundle $\pi\cl E\to X$, we identify $X$ with the zero-section, we denote by $E_x$ the fiber of $E$ at $x\in X$, we set $\dot E=E\setminus X$ and we denote by 
$\dot\pi\cl \dot E\to X$ the projection.
For a cone $\gamma$ in a vector bundle $E\to X$, we set $\gamma_x=\gamma\cap E_x$, we denote by $\gamma^a=-\gamma$ the opposite cone and by $\gamma^\circ$ the polar cone in the dual vector bundle $E^*$,
\eqn
&&\gamma^\circ=\{(x;\xi)\in E^*;\langle \xi,v\rangle\geq0 \mbox{ for all $x\in M, v\in \gamma_x$}\}.
\eneqn
For $A\subset X$, the Whitney normal cone  of $A$ along $M$, $C_M(A)\subset T_MX$, is defined in~\cite{KS90}*{Def.~4.1.1}.

To a morphism of manifolds $f\cl Y\to X$, one associates the maps:
\eq\label{diag:microlocal1}
&&\xymatrix{
T^*Y\ar[dr]_-{\pi_Y}&\ar[l]_-{f_d}Y\times_XT^*X\ar[r]^-{f_\pi}\ar[d]^-\pi&T^*X\ar[d]^-{\pi_X}\\
&Y\ar[r]^-f&X
}\eneq
where $f_d$ is the transpose of the tangent map  $Tf\cl TY\to Y\times_XTX$.
\begin{definition}
Let $\Lambda$ be a closed conic subset of $T^*X$. One says that $f$ is non characteristic for $\Lambda$ if the map $f_d$ is proper on $\opb{f_\pi}(\Lambda)$.
\end{definition}
\subsubsection*{Sheaves}
Let $\cor$ be a field. 
One denotes by $\Derb(\cor_X)$ the bounded derived category of sheaves of $\cor$-vector spaces on $X$. We simply call an object of this category ``a sheaf''. 
For a closed subset $A$ of a manifold
we denote by $\cor_A$ the constant sheaf on $A$ with stalk $\cor$ extended by $0$  outside of $A$.  More generally, we shall identify a sheaf on $A$ and its extension by $0$   outside of $A$. If $A$ is locally closed, we keep the notation $\cor_A$ as far as there is no risk of confusion. 
We denote by  $\omega_X$ the dualizing complex on $X$. Recall that $\omega_X\simeq\ori_X\,[\dim X]$ where $\ori_X$ is the orientation sheaf and $\dim X$ is the dimension of $X$. More generally, we consider the relative  dualizing complex associated with  a morphism $f\cl Y\to X$, $\omega_{Y/X}=\omega_Y\tens\opb{f}(\omega_X^{\otimes-1})$ and its inverse, $\omega_{X/Y}= \omega_{Y/X}^{\otimes-1}$.
We denote by 
$\RD'_X(\scbul)=\rhom(\scbul,\cor_X)$  the duality functor on $X$. 

We shall use freely the six Grothendieck operations on sheaves. 

\subsubsection*{Microlocalization}
For a closed submanifold $M$ of $X$, 
we have the functors 
\eqn
&&\nu_M\cl \Derb(\cor_X)\to\Derb_{\R^+}(\cor_{T_MX})\mbox{ specialization along $M$},\\
&&\mu_M\cl \Derb(\cor_X)\to\Derb_{\R^+}(\cor_{T^*_MX})\mbox{ microlocalization along $M$},\\
&&\muhom \cl \Derb(\cor_X)\times\Derb(\cor_X)^\rop\to\Derb_{\R^+}(\cor_{T^*X}).
\eneqn
Here, for a vector bundle $E\to M$ or $E\to X$, $\Derb_{\R^+}(\cor_{E})$ is the full subcategory of $\Derb(\cor_{E})$ consisting of conic sheaves, that is, sheaves locally constant under the $\R^+$-action. 

The functor $\mu_M$, called Sato's microlocalization functor,  is the  Fourier--Sato transform of the specialization functor $\nu_M$. The bifunctor $\muhom$ of~\cite{KS90} is  a  slight generalization of $\mu_M$. Recall that $\mu_M(\scbul)=\muhom(\cor_M,\scbul)$. 

Let $\lambda$ be a closed convex proper cone of $T^*_MX$ containing the zero-section $M$. For $F\in\Derb(\cor_X)$, we have an isomorphism (see~\cite{KS90}*{Th.~4.3.2}):
\eq\label{eq:nuandmu}
&&\roim{\pi_M}\rsect_\lambda(\mu_M(F))\tens\omega_{X/M}\simeq\roim{\tau_M}\rsect_{\Int(\lambda^{\circ a})}(\nu_M(F)).
\eneq
(Recall that $\lambda^{\circ a}$ is the opposite of the polar cone $\lambda^\circ$.)

\subsubsection*{Microsupport}
To a sheaf $F$ is associated its microsupport $\musupp(F)$\footnote{$\musupp(F)$ was denoted $\SSi(F)$ in~\cite{KS90}.}, a closed $\R^+$-conic {\em co-isotropic} subset of $T^*X$.

Let us recall some results that we shall use. 
\begin{theorem}\label{th:invim}
Let $f\cl Y\to X$ be a morphism of real manifolds and let $F\in\Derb(\cor_X)$. Assume that $f$ is non characteristic for $F$, that is, for $\musupp(F)$. Then the morphism $\opb{f}F\tens\,\omega_{Y/X}\to\epb{f}F$ is an isomorphim.
\end{theorem}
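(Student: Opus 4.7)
The plan is to reduce, in several stages, to the case of a hypersurface embedding, where the result follows from the microlocal propagation theorem.

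First, one must construct the natural morphism $\opb{f}F\tens\omega_{Y/X}\to\epb{f}F$. This map exists in full generality: it comes from the projection formula together with the unit/counit of the adjunction $(\reim{f},\epb{f})$ applied to $\cor_Y$, and is independent of any hypothesis on $F$. Under the non-characteristic assumption, we must show it is an isomorphism.

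Second, I would factor $f=p_2\circ i_\Gamma$, where $i_\Gamma\cl Y\into Y\times X$ is the graph embedding and $p_2\cl Y\times X\to X$ is the projection. For the smooth morphism $p_2$, the analogous morphism is an isomorphism for \emph{every} sheaf, without any microsupport hypothesis (this is the standard compatibility of $\epb{p_2}$ with $\opb{p_2}$ up to a twist by the relative dualizing complex for submersions). Moreover, the non-characteristic condition for $f$ on $\musupp(F)$ transfers, via the estimate for $\musupp(\opb{p_2}F)\subset T^*Y\times\musupp(F)$, into a non-characteristic condition for the closed embedding $i_\Gamma$ with respect to $\opb{p_2}F$. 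So one is reduced to the case where $f$ is a closed embedding.

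Third, if $i\cl M\into X$ is a closed embedding of codimension $d$, then locally one can factor it as a sequence of $d$ hypersurface embeddings $M=M_d\into M_{d-1}\into\dots\into M_0=X$. The non-characteristic condition at each step follows from the non-characteristic hypothesis for the composed embedding, by the projection formulas for the cotangent bundle. By induction on $d$, it then suffices to treat the case of a hypersurface.

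Fourth, and this is the main obstacle, one must handle the hypersurface case. Choose local coordinates with $X=M\times\R$ and let $t$ denote the $\R$-coordinate. The non-characteristic hypothesis says that $\musupp(F)\cap T^*_MX$ is contained in the zero section, i.e.\ no covector of the form $(x,0;0,\tau)$ with $\tau\ne 0$ lies in $\musupp(F)$ over $M$. The microlocal propagation theorem (\cite{KS90}*{Prop.~5.4.8} and the microlocal cut-off lemma) then implies that, in a neighborhood of $M$, the natural morphism $F\to\RR\sect_{\{t\geq 0\}}(F)\oplus\RR\sect_{\{t\leq 0\}}(F)[1]$ is well-controlled, and more precisely that $\epb{i}F$ can be computed as $\opb{i}\RR\sect_M(F)[1]$, shifted suitably. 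Comparing this to the isomorphism~\eqref{eq:nuandmu} specialized to $\lambda=T^*_MX$, which identifies $\opb{i}F\tens\omega_{M/X}$ with the same object, yields the desired isomorphism. The compatibility with the canonical morphism is then verified by tracing through the construction.
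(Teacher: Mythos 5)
First, note that the paper does not prove Theorem~\ref{th:invim} at all: it is recalled from \cite{KS90} (the non-characteristic inverse image theorem, Prop.~5.4.13 there), so the comparison is with the classical proof. Your overall skeleton --- construct the canonical morphism, factor $f$ through its graph, observe that for the submersion $p_2$ the morphism is an isomorphism unconditionally, and transfer the non-characteristic condition to the graph embedding via $\musupp(\opb{p_2}F)$ --- is exactly the standard reduction and is fine.

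The gaps are in your last two steps. (a) In the induction on codimension, ``the non-characteristic condition at each step follows \ldots by the projection formulas for the cotangent bundle'' hides the real issue: after the first step you need the next hypersurface $M_2\into M_1$ to be non-characteristic for $\musupp(\opb{i_1}F)$ (equivalently for $\musupp(\epb{i_1}F)$), and to control that set you need the estimate $\musupp(\opb{i_1}F)\subset i_{1d}\opb{i_{1\pi}}\bl\musupp(F)\br$ for a non-characteristic hypersurface. This is precisely the companion statement (i) of the same proposition in \cite{KS90}; you neither prove it nor cite it, and without it the induction does not close (it can be repaired by proving the estimate and the isomorphism together by induction, but that must be said). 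A smaller point: $M_1\supset M$ is a priori non-characteristic for $F$ only at points of $M$; one needs the remark that the characteristic locus is closed to get it on a neighborhood of $M$ in $M_1$. (b) In the hypersurface case, the statement you isolate carries no content: $\epb{i}F\simeq\opb{i}\rsect_M(F)$ holds for \emph{every} $F$, with no hypothesis and without your extra shift. The whole point is the identification $(\rsect_MF)\vert_M\simeq \opb{i}F\tens\omega_{M/X}$, which follows from the definition of the microsupport --- $(x;\pm dt)\notin\musupp(F)$ gives $(\rsect_{\{t\ge0\}}F)_x\simeq0$ and $(\rsect_{\{t\le0\}}F)_x\simeq0$ for $x\in M$ --- and the two excision triangles; your ``well-controlled'' morphism is not an argument. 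Moreover, invoking~\eqref{eq:nuandmu} with $\lambda=T^*_MX$ cannot give this: for that $\lambda$ one has $\Int(\lambda^{\circ a})=\varnothing$, so the right-hand side of~\eqref{eq:nuandmu} vanishes and identifies nothing. If you want the microlocal route, the efficient argument is: $\supp\mu_M(F)\subset T^*_MX\cap\musupp(F)$ is contained in the zero-section, hence $\pi_M$ is proper on it and $\reim{\pi_M}\mu_M(F)\isoto\roim{\pi_M}\mu_M(F)$, which is exactly $\opb{i}F\tens\omega_{M/X}\isoto\epb{i}F$; this settles the embedding case in one stroke, with no induction on codimension at all.
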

As a particular case of this result, we get a kind of  Petrowski theorem for sheaves (see Theorem~\ref{eq:elliptic1} below):
\begin{corollary}\label{cor:ell}
Let $M$ be a closed submanifold of $X$ and let $F\in\Derb(\cor_X)$.  Assume that $T^*_MX\cap\musupp(F)\subset T^*_XX$. Then 
$F\tens\cor_M\simeq \rsect_MF\tens\ori_{M/X}\,[\codim _XM]$.
\end{corollary}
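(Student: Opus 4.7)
The statement is, in essence, a direct application of Theorem~\ref{th:invim} to the closed embedding $i\cl M\hookrightarrow X$, so the plan is to verify the hypothesis, invoke the theorem, and then re-express the resulting isomorphism in the form stated.

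First I would unwind the non-characteristic condition for $i$. For a closed embedding, the fibre product $M\times_XT^*X$ is the restriction $T^*X|_M$; the map $f_\pi$ is the inclusion, while $f_d\cl T^*X|_M\to T^*M$ is the natural surjection whose fibre over the zero-section is $T^*_MX$. Hence the non-characteristic condition, namely that $f_d$ be proper on $\opb{f_\pi}(\musupp F)=\musupp(F)\cap T^*X|_M$, is equivalent to requiring that $\musupp(F)\cap T^*_MX$ be contained in the zero section $M\subset T^*_MX$. This is exactly the hypothesis $T^*_MX\cap\musupp(F)\subset T^*_XX$.

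Next, Theorem~\ref{th:invim} applied to $i$ supplies the isomorphism
\[
\opb{i}F\tens\omega_{M/X}\isoto\epb{i}F.
\]
Since $\omega_{M/X}=\omega_M\tens\opb{i}(\omega_X^{\otimes-1})\simeq\ori_{M/X}\,[-\codim_XM]$, and the orientation sheaf $\ori_{M/X}$ is its own $\tens$-inverse, this rearranges into
\[
\opb{i}F\simeq\epb{i}F\tens\ori_{M/X}\,[\codim_XM].
\]

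To conclude, I would push this isomorphism forward along $i$. Since $i$ is a closed embedding, $\roim{i}=\reim{i}$, and the standard identities
$\roim{i}\opb{i}F\simeq F\tens\cor_M$ (projection formula, using $\cor_M\simeq\oim{i}\cor_M$) and
$\roim{i}\epb{i}F\simeq\rsect_MF$ (definition of local cohomology with support)
give exactly the two sides of the claimed isomorphism once the rank-one local system $\ori_{M/X}\,[\codim_XM]$ is passed through $i_*$; equivalently, both sides of the desired isomorphism are supported on $M$, so it suffices to check it after applying $\opb{i}$, and then it becomes the displayed isomorphism above. The only real content is the unwinding of the non-characteristic hypothesis for the inclusion; everything else is formal bookkeeping with the six operations.
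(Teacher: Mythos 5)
Your proof is correct and follows the same route the paper intends: the corollary is stated there simply as ``a particular case'' of Theorem~\ref{th:invim}, and you fill in exactly the expected bookkeeping, namely that for a closed embedding $i\cl M\hookrightarrow X$ the non-characteristic condition reduces to $T^*_MX\cap\musupp(F)\subset T^*_XX$ (since $\ker f_d=T^*_MX$ and $f_d$ is proper on a conic set iff that set meets $\ker f_d$ only in the zero section), after which pushing $\opb{i}F\tens\omega_{M/X}\isoto\epb{i}F$ forward by $\roim{i}$ and using $\omega_{M/X}\simeq\ori_{M/X}[-\codim_XM]$, the self-inverse property of $\ori_{M/X}$, $\roim{i}\opb{i}F\simeq F\tens\cor_M$ and $\roim{i}\epb{i}F\simeq\rsect_MF$ yields the stated isomorphism.
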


Let $M$ be a closed submanifold of $X$. If $\Lambda\subset T^*X$ is a closed conic subset, its Whitney normal cone along $T^*_MX$ is a closed biconic subset of 
$T_{T^*_MX}T^*X\simeq T^*T^*_MX$. Moreover, there exists a natural embedding 
\eq\label{eq:hyp1}
&&T^*M\into T^*T^*_MX\simeq T_{T^*_MX}T^*X.
\eneq
Now we consider a morphism of manifolds $g\cl L\to X$ and let $M\subset X$ and $N\subset L$ be two closed submanifolds such that the map $g$ induces a closed embedding $g\vert_N\cl N\into M$. 
One gets the maps
\eq\label{diag:671}
&&\xymatrix{
T^*L&L\times_XT^*X\ar[l]_-{g_d}\ar[r]^-{g_\pi}&T^*X\\
T_N^*L\ar@{^(->}[u]&N\times_MT_M^*X\ar[l]_-{g_{Nd}}\ar[r]^-{g_{N\pi}}\ar@{^(->}[u]&T_M^*X.\ar@{^(->}[u]
}\eneq
The next result is a particular case of~\cite{KS90}*{Th.~6.7.1} in which we choose $V=T^*_NL$ and write 
$g\cl L\to X$ instead of $f\cl Y\to X$. (The reason of this change of notations is that we need to consider  the complexification of the embedding $N\into M$ that we shall denote by $f\cl Y\into X$.) 

\begin{theorem}\label{th:671}
Let $F\in\Derb(\cor_X)$ and assume
\banum
\item
$g$ is non characteristic for $F$,
\item
the map $g_{N\pi}$ is non characteristic for $C_{T^*_MX}(\musupp(F))$,
\item
$\opb{g_d}T_N^*L\cap\opb{g_\pi}\musupp(F)\subset L\times_XT^*_MX$.
\eanum
Then one has the commutative diagram of natural isomorphisms on $T^*_LX$:
\eq\label{eq:isowick1}
&&\xymatrix{
\reim{g_{Nd}}(\omega_{N/M}\tens\opb{g_{N\pi}}\mu_M(F))\ar[r]^-\sim\ar[d]^-\sim
      &\mu_N(\omega_{L/X}\tens\opb{g}F)\ar[d]^-\sim\\
\roim{g_{Nd}}(\epb{g_{N\pi}}\mu_M(F))&\ar[l]^-\sim\mu_N(\epb{g}F).
}\eneq
\end{theorem}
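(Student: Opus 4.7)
The plan is to deduce Theorem~\ref{th:671} by specializing~\cite{KS90}*{Th.~6.7.1} to the choice $V=T^*_NL$; accordingly, the essential work consists in matching the three hypotheses (a)--(c) of the statement to the general hypotheses of \emph{loc.\ cit.}\ and in identifying the four vertices of \eqref{eq:isowick1} with the objects produced there.

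First I would rewrite~\cite{KS90}*{Th.~6.7.1} in the notation of \eqref{diag:671} with $V=T^*_NL$. The non-characteristic hypotheses of that statement then split into three pieces. The first, requiring $g$ to be non-characteristic for $\musupp(F)$, is our (a), and it supplies via Theorem~\ref{th:invim} the isomorphism $\opb{g}F\tens\omega_{L/X}\isoto\epb{g}F$. The second is the non-characteristic condition for the map $g_{N\pi}$ of \eqref{diag:671} against the Whitney normal cone $C_{T^*_MX}(\musupp(F))$, i.e.\ our (b). The third is the transversality condition guaranteeing that the resulting isomorphism lives on the whole of $V=T^*_NL$; in view of the embedding \eqref{eq:hyp1} this is precisely our (c).

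Next I would construct the four arrows separately. The right vertical arrow is obtained by applying $\mu_N$ to the isomorphism supplied by (a) and Theorem~\ref{th:invim}. The left vertical arrow is the same kind of isomorphism, but applied to the map $g_{N\pi}\cl N\times_MT^*_MX\to T^*_MX$ and the conic sheaf $\mu_M(F)$, its non-characteristic hypothesis being (b); one also uses that $g_{Nd}$ is a closed embedding of vector bundles over $N$, so $\reim{g_{Nd}}\simeq\roim{g_{Nd}}$, and that the relative dimension along $g_{Nd}$ equals $\codim_M N$, which is why $\omega_{N/M}$ appears on the left rather than $\omega_{N/L}$. The bottom horizontal arrow is the core content of~\cite{KS90}*{Th.~6.7.1}, namely the commutation of Sato microlocalization with proper inverse image, whose global validity on $T^*_NL$ requires exactly (c). The top horizontal arrow is then obtained by transport along the two vertical isomorphisms.

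Commutativity of the resulting square will be automatic because all four arrows arise as natural transformations from the same adjunction-and-base-change formalism applied to \eqref{diag:671}. The main obstacle I anticipate is notational rather than conceptual: one has to keep track carefully of the various relative dualizing complexes ($\omega_{L/X}$, $\omega_{N/M}$, $\omega_{L/M}$, $\omega_{N/L}$) across the two factorizations $N\hookrightarrow L\to X$ and $N\hookrightarrow M\hookrightarrow X$, and to verify that the isomorphism supplied by~\cite{KS90}*{Th.~6.7.1} is indeed defined on all of $T^*_NL$ under (c) and not merely off the zero-section or on some smaller conic subset.
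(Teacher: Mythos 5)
Your overall strategy matches the paper exactly: Theorem~\ref{th:671} is stated in the paper without a proof, preceded only by the remark that it is the special case of \cite{KS90}*{Th.~6.7.1} with $V=T^*_NL$ (and a change of letter from $f$ to $g$), and you correctly identify this as the whole content of the deduction. However, two of your explanatory glosses are factually wrong and would mislead a reader trying to reconstruct the argument in detail.

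First, the claim that $g_{Nd}\cl N\times_MT^*_MX\to T^*_NL$ is a closed embedding of vector bundles is false in general, and in particular false in the Wick-rotation setting of this paper. Over a point of $N$, the source has fibre dimension $\codim_XM$ and the target has fibre dimension $\codim_LN$; when $X$ is a complexification of both $M$ and $L$ these need not agree. In the local coordinates of Lemma~\ref{le:671}, $g_{Nd}$ is the projection $(x';\eta',\eta'')\mapsto(x';\eta')$, so it is a surjective bundle map with positive-dimensional fibres, not an embedding. The isomorphism $\reim{g_{Nd}}\to\roim{g_{Nd}}$ on the sheaves appearing in \eqref{eq:isowick1} is therefore not an a priori fact about $g_{Nd}$ but is part of the \emph{conclusion} of \cite{KS90}*{Th.~6.7.1}, established there under exactly the hypotheses (a)--(c). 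If $g_{Nd}$ were an embedding one would not need the full strength of that theorem at all.

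Second, you attribute the factor $\omega_{N/M}$ in the top-left vertex to the relative dimension of $g_{Nd}$. In fact it is the relative dualizing complex attached to the map $g_{N\pi}\cl N\times_MT^*_MX\to T^*_MX$: this map is the base change of $N\hookrightarrow M$ along $T^*_MX\to M$, so its relative dualizing complex is the pullback of $\omega_{N/M}$. Theorem~\ref{th:invim} applied to $g_{N\pi}$, under the non-characteristic hypothesis (b), gives $\omega_{N/M}\tens\opb{g_{N\pi}}\mu_M(F)\isoto\epb{g_{N\pi}}\mu_M(F)$, and this is what the left vertical arrow encodes before pushing forward along $g_{Nd}$. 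Your description of the right vertical arrow (apply $\mu_N$ to the isomorphism coming from (a) and Theorem~\ref{th:invim} for $g$) is correct.
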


\begin{notation}
As usual, we have simply 
writen $\omega_M$ instead of $\opb{\pi}\omega_M$ and similarly with other locally constant sheaves. 
\end{notation}
Consider the projections
\eq\label{diag:5}
&&\xymatrix{
T_N^*L\ar[dr]_-{\pi_N}&N\times_MT_M^*X\ar[l]_-{g_{Nd}}\ar[r]^-{g_{N\pi}}\ar[d]^-\pi&T_M^*X\ar[d]^-{\pi_M}\\
&N\ar[r]^-g&M
}\eneq
One has the isomorphisms
\eq\label{eq:isolhs}
\roim{\pi_N}\roim{g_{Nd}}(\epb{g_{N\pi}}\mu_M(F))&\simeq &\roim{\pi}(\epb{g_{N\pi}}\mu_M(F))\nonumber\\
&\simeq &\epb{g}\roim{\pi_M}\mu_M(F)\simeq\rsect_NF,
\eneq
and 
\eq\label{eq:isorhs}
\roim{\pi_N}\mu_N(\epb{g}F)&\simeq &\rsect_N\epb{g}F\simeq\rsect_NF.
\eneq
Moreover, one easily proves:
\begin{lemma}\label{le:wickcompatible}
The isomorphisms~\eqref{eq:isolhs} and~\eqref{eq:isorhs} are compatible with the morphisms obtained by applying $\roim{\pi_N}$ 
to~\eqref{eq:isowick1}.
\end{lemma}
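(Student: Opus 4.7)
The plan is to apply the functor $\roim{\pi_N}$ to the commutative square \eqref{eq:isowick1} and to verify that the two identifications \eqref{eq:isolhs} and \eqref{eq:isorhs} of its bottom vertices with $\rsect_N F$ are compatible with the bottom horizontal map of the pushforward square; by the commutativity of \eqref{eq:isowick1}, the compatibility on the three remaining edges will then follow automatically.

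For the right-hand identification \eqref{eq:isorhs}, I would invoke the naturality of the canonical isomorphism $\roim{\pi_N}\mu_N(\scbul)\simeq\rsect_N(\scbul)$. It reduces $\roim{\pi_N}$ applied to $\mu_N(\epb{g}F)$ to $\rsect_N\epb{g}F$, and since $g\vert_N\colon N\into M$ is a closed embedding, $\rsect_N\epb{g}F\simeq\rsect_N F$. This is exactly \eqref{eq:isorhs}, and its construction is natural in $F$.

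For the left-hand identification \eqref{eq:isolhs}, the relation $\pi_N\circ g_{Nd}=\pi$ (visible in \eqref{diag:5}) gives $\roim{\pi_N}\roim{g_{Nd}}\simeq\roim{\pi}$. Applied to $\epb{g_{N\pi}}\mu_M(F)$, proper base change on the Cartesian square of \eqref{diag:5} yields
\[
\roim{\pi}\epb{g_{N\pi}}\mu_M(F)\simeq\epb{g}\roim{\pi_M}\mu_M(F),
\]
which combined with $\roim{\pi_M}\mu_M(F)\simeq\rsect_M F$ and $\epb{g}\rsect_M F\simeq\rsect_N F$ reproduces \eqref{eq:isolhs}. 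Each of these isomorphisms is natural in $F$.

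It remains to compare the bottom arrow of \eqref{eq:isowick1}, namely $\mu_N(\epb{g}F)\to\roim{g_{Nd}}(\epb{g_{N\pi}}\mu_M(F))$, with the identity morphism of $\rsect_N F$ under the above identifications. This arrow is the natural transformation produced in \cite{KS90}*{Th.~6.7.1} from the functoriality of microlocalization with respect to non-characteristic inverse image, and it is built from exactly the same ingredients — proper base change on \eqref{diag:5}, the adjunctions underlying $\roim{\pi_?}\mu_?\simeq\rsect_?$, and the non-characteristic isomorphism of Theorem~\ref{th:invim} — as the isomorphisms \eqref{eq:isolhs} and \eqref{eq:isorhs}. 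Applying $\roim{\pi_N}$ and chasing, the resulting map $\rsect_N F\to\rsect_N F$ must therefore be the identity. The main obstacle is purely organizational: keeping track of the various dualizing shifts ($\omega_{L/X}$, $\omega_{N/M}$, and the conormal shifts implicit in $\mu_M$, $\mu_N$) and matching the base-change diagrams used to assemble \cite{KS90}*{Th.~6.7.1} with those used in the derivation of \eqref{eq:isolhs} and \eqref{eq:isorhs}. No new geometric or analytic input is needed; the argument is a diagram chase between naturality squares.
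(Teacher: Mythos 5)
Your proposal is correct, and it is consistent with the paper, which in fact gives no proof of this lemma at all (it is introduced only with ``one easily proves''). Your reduction to a naturality diagram chase --- identifying both bottom vertices with $\rsect_N F$ via the base-change isomorphism $\epb{g}\roim{\pi_M}\simeq\roim{\pi}\epb{g_{N\pi}}$ on the Cartesian square of \eqref{diag:5} and the canonical $\roim{\pi_N}\mu_N\simeq\rsect_N$, then checking that the bottom arrow of \eqref{eq:isowick1} induces the identity of $\rsect_N F$ --- is exactly the verification the paper leaves to the reader, the only soft spot being that the final ``must therefore be the identity'' is asserted rather than traced through the construction of the morphisms in \cite{KS90}*{Th.~6.7.1}.
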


\begin{lemma}\label{le:671}
In the situation of {\rm Theorem~\ref{th:671}} assume moreover that $g\cl L\to X$ is a closed embedding, $N=L\cap M$ and the intersection is clean \lp that is, $TN=N\times_MTM\cap N\times_LTL$\rp. Then condition {\rm (c)} follows from {\rm (b)}. 
\end{lemma}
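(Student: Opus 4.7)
The plan is a pointwise contradiction argument. Fix $p = (x;\xi) \in L \times_X T^*X$ with $g_d(p) \in T^*_N L$ and $g_\pi(p) \in \musupp(F)$; the first condition forces $x \in N$ and $\xi|_{T_x N} = 0$, i.e.\ $\xi \in (T^*_N X)_x$. The goal is to show $\xi \in (T^*_M X)_x$, so we assume the contrary and aim at a contradiction to {\rm (b)}.

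The first step uses the clean intersection hypothesis: passing to annihilators in $T_x N = T_x L \cap T_x M$ gives $(T^*_N X)_x = (T^*_L X)_x + (T^*_M X)_x$, so one can decompose $\xi = \xi_L + \xi_M$ with $\xi_L \in (T^*_L X)_x$ and $\xi_M \in (T^*_M X)_x$. The assumption $\xi \notin (T^*_M X)_x$ is then equivalent to $\xi_L \notin (T^*_M X)_x$. The plan is to produce, from this data, a nonzero element of $C_{T^*_M X}(\musupp(F)) \cap T^*_{N \times_M T^*_M X}(T^*_M X)$, which directly contradicts {\rm (b)}.

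The key construction tests the Whitney normal cone along the scaling ray of $\xi$. Set $p_n \eqdot (x, \xi/n) \in \musupp(F)$ (by conic invariance) and $q_n \eqdot (x, \xi_M/n) \in T^*_M X$. Both sequences converge to $(x, 0)$ and $n(p_n - q_n) \to (0, \xi_L)$ in $T_{(x,0)}T^*X$, so the vertical direction $(0, \xi_L)$ belongs to the normal cone $C_{T^*_M X}(\musupp(F))$ at the point $(x, 0)$.

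It remains to transport $(0, \xi_L)$ through the Lagrangian identification $T_{T^*_M X}T^*X \simeq T^*T^*_M X$ of \eqref{eq:hyp1} and check that it lies in $T^*_{N \times_M T^*_M X}(T^*_M X)$. Via the symplectic form, $(0, \xi_L)$ corresponds, up to sign, to the covector sending $(v, \eta) \in T_x M \oplus (T^*_M X)_x = T_{(x,0)}T^*_M X$ to $\langle \xi_L, v\rangle$. Since $\xi_L$ annihilates $T_x L$ and $T_x N \subset T_x L$, this covector vanishes on $T(N \times_M T^*_M X) = T_x N \oplus (T^*_M X)_x$, so it belongs to the stated conormal bundle; and it is nonzero precisely because $\xi_L|_{T_x M} \neq 0$, i.e.\ $\xi_L \notin (T^*_M X)_x$. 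I anticipate the main subtlety to be keeping the Lagrangian identification straight so that the normal cone element is correctly matched with a covector in the conormal bundle to $N \times_M T^*_M X$ inside $T^*_M X$, rather than in a larger piece of $T^* T^*_M X$.
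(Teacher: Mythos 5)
Your proof is correct and takes essentially the same route as the paper's: produce, from a putative counterexample to (c) and the $\R^+$-conicity of $\musupp(F)$, a nonzero element of $C_{T^*_MX}(\musupp(F))$ lying in the image of $T^*_NM$ under the embedding $T^*M\into T^*T^*_MX$, thereby contradicting (b). The only difference is presentational — the paper carries this out in an adapted local coordinate system $(x',x'',y',y'')$ on $X$ and reads off the conclusion, whereas you run the scaling argument $n(p_n-q_n)\to(0,\xi_L)$ and the symplectic identification invariantly, using the decomposition $(T^*_NX)_x=(T^*_LX)_x+(T^*_MX)_x$ coming from clean intersection.
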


\begin{proof}
Let us choose a local coordinate system $(x',x'',y',y'')$ on $X$  such that $M=\{y'=y''=0)\}$ and $L=\{x''=y''=0\}$. Denote by 
$(x',x'',y',y'';\xi',\xi'',\eta',\eta'')$ the coordinates on $T^*X$ and   by  $(x',x'';\xi',\xi'')$ the coordinates on $T^*M$. Then
\begin{align*}
&M=\{y'=y''=0)\}, & T^*_MX=\{y'=y''=\xi'=\xi''=0\},\\
&L=\{x''=y''=0\}, & T^*_LX=\{x''=y''=\xi'=\eta'=0\},\\
&N=\{x''=y'=y''=0)\}, &T^*_NX=\{x''=y'=y''=\xi'=0\},\\
&g_d\cl (x',y';\xi',\xi'',\eta',\eta'')\mapsto (x',y';\xi',\eta'), &.
\end{align*}
Therefore $\opb{g_d}T_N^*L=\{ (x',y';\xi',\xi'',\eta',\eta'')\in L\times_XT^*X;y'=\xi'=0\}=T^*_NX$. 
Let $\theta\in T_{T^*_MX}T^*X$ with $\theta\notin C_{T^*_MX}(\musupp(F))$. Then 
$(x',x'';\eta',\eta'')+\theta\notin\musupp(F)$. Choosing $\theta\in T^*_NM$, $\theta\neq0$, we get that 
$(x',0;0,\xi'',\eta',\eta'')\in\musupp(F)$ implies $\xi''=0$. 
\end{proof}

\subsection{Analytic wave front set}
From now on and until the end of this paper, unless otherwise specified, all manifolds are (real or complex) analytic and the base field $\cor$ is $\C$.

Let $M$ be a real manifold of dimension $n$ and let $X$ be a complexification of $M$. 
One denotes by $\sha_M$ the sheaf of complex valued real analytic functions on $M$, that is, $\sha_M=\sho_X\vert_M$. 

One denotes by $\shb_M$ and $\shc_M$ the sheaves on $M$ and  $T^*_MX$ of Sato's hyperfunctions and microfunctions, respectively. 
 Recall that these sheaves are defined by
 \eqn
 &&\sha_M\eqdot\sho_X\tens\C_M,\quad \shb_M\eqdot\rhom(\RD'_X\C_M,\sho_X),\quad\shc_M\eqdot\muhom(\RD'_X\C_M,\sho_X).
 \eneqn
 In particular, $\rhom(\RD'_X\C_M,\sho_X)$ and $\muhom(\RD'_X\C_M,\sho_X)$ are concentrated in degree $0$. 
 Since $\RD'_X\C_M\simeq\ori_M\,[-n]\simeq\omega_{M/X}\simeq\omega_M^{\otimes-1}$,  we get that 
\eqn
&&\shb_M\simeq \rsect_M(\sho_X)\tens\omega_{M}\simeq H^n_M(\sho_X)\tens\ori_M,\\
&&\shc_M\simeq \mu_M(\sho_X)\tens\omega_{M}\simeq H^n(\mu_M(\sho_X))\tens\ori_M.
\eneqn
The sheaf $\shb_M$ is flabby and the sheaf $\shc_M$ is conically flabby. 

Moreover, since $\roim{\pi}\circ\muhom\simeq\rhom$, we have the isomorphism $\shb_M\isoto \oim{\pi}\shc_M$.
One deduces the isomorphism:
\eqn
&&\spec\cl \sect(M;\shb_M)\isoto\sect(T^*_MX;\shc_M).
\eneqn

\begin{definition}[\cite{Sa70}]
The analytic wave front set of a hyperfunction $u\in\sect(M;\shb_M)$, denoted $\WF(u)$, is the support of $\spec(u)$, a closed conic subset of $T^*_MX$. 
\end{definition}

The next result is well-known to the specialists.
Let $M$ be a real analytic manifold, $X$ a complexification of $M$ and let $\lambda$ be a closed convex proper cone in $T^*_MX$. 
\begin{theorem}\label{th:uniquec}
Let $u\in\sect(M;\shb_M)$ with ${\WF}(u)\subset\lambda$. Assume that $M$ is connected and that $u\equiv0$ on an open subset $U\subset M$, $U\neq\varnothing$. Then $u\equiv0$ on $M$. 
\end{theorem}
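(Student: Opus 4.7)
The plan is to show that the set $V = \{x \in M : u \equiv 0 \text{ in a neighborhood of } x\}$, which is open by definition and nonempty (it contains $U$), is also closed in $M$. By connectedness of $M$, this will force $V = M$, whence $u \equiv 0$.

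Set $\gamma = \Int(\lambda^{\circ a}) \subset T_MX$. Since $\lambda$ is a closed convex \emph{proper} cone, $\gamma$ is a nonempty open convex cone, hence connected in every fiber of $\tau_M$. The main geometric input is the realization of $u$ as a boundary value of a holomorphic function with profile $\gamma$: by the isomorphism~\eqref{eq:nuandmu} applied to $F = \sho_X$, together with the classical identification of $\nu_M(\sho_X)$ on open convex cones with holomorphic functions on tuboids (Martineau--Sato), any hyperfunction on a sufficiently small connected open $W \subset M$ with wave front set in $\lambda$ can be written as $u|_W = b(\tilde u)$ with $\tilde u \in \sho_X(\Omega)$, where $\Omega \subset X$ is a tuboid with edge $W$ and profile $\gamma$. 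Since $W$ is connected and the fibres of $\Omega \to W$ are connected open neighborhoods of $0$ in $\gamma$, the complex manifold $\Omega$ is itself connected.

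To prove that $V$ is closed, I would pick $x_0 \in \ol V \cap M$, choose a connected open neighbourhood $W$ of $x_0$ in $M$ small enough to admit such a representation $u|_W = b(\tilde u)$, and observe that $W \cap V$ is a nonempty open subset of $W$ on which $u$ vanishes. The vanishing of the Sato boundary value on $W \cap V$ implies that $\tilde u$ extends holomorphically by zero across $W \cap V$, so that $\tilde u$ vanishes on a nonempty open subset of $\Omega$. The identity principle on the connected complex manifold $\Omega$ then forces $\tilde u \equiv 0$, whence $u \equiv 0$ on $W$ and $x_0 \in V$.

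The main obstacle in this plan is that one cannot expect a \emph{global} holomorphic $\tilde u$ defined on a tuboid with edge all of $M$; this is sidestepped by working locally around each boundary point of $V$, where~\eqref{eq:nuandmu} applies directly. The remaining nontrivial ingredient is the injectivity of the boundary-value map (vanishing boundary value implies holomorphic extension by zero), which follows from the long exact sequence of local cohomology with values in $\sho_X$ together with the concentration in a single degree of $\rsect_M \sho_X$ and $\mu_M \sho_X$ on a proper convex cone.
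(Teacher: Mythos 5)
Your proof is correct and follows essentially the same strategy as the paper's: locally represent $u$ as the boundary value $b(\tilde u)$ of a single holomorphic function on a tuboid of profile $\Int(\lambda^{\circ a})$, use vanishing of $u$ on an open set to extend $\tilde u$ holomorphically by zero, and then apply the identity principle on the connected tuboid to conclude. The only cosmetic difference is that you work with the open/closed set $V$ of points near which $u$ vanishes, whereas the paper argues with $\supp(u)$ and points of its boundary, which is the same open/closed dichotomy stated from the other side.
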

\begin{proof}
Let $S=\supp(u)$ and let $x\in\partial S$. Choosing a local chart in a neighborhood of $x$, we may assume from the beginning that 
$M$ is open in $\R^n$ and that   $\lambda\subset M\times\sqrt{-1}\gamma^\circ$ where 
$\gamma$ is a non empty  open convex cone of $\R^n$. Then there exists a holomorphic function 
$f\in\sect((M\times\sqrt{-1}\gamma)\cap W;\sho_X)$, where $W$ is a connected open neighborhood of $M$ in $X$, such that $u=b(f)$, that is,  
$u$ is the boundary value of $f$. If $b(f)$ is analytic on $U$, then $f$ extends holomorphically in a neighborhood of $U$ in $X$. If moreover $f=0$ on $U$, then $f\equiv0$ on $(M\times\sqrt{-1}\gamma)\cap W$ and thus $u\equiv0$.
\end{proof}

\subsection{D-modules}

Let $(X,\sho_X)$ be a complex manifold. One denotes by  $\shd_X$ the sheaf of rings of finite order holomorphic differential operators on $X$. In the sequel, a $\shd_X$-module means a left $\shd_X$-module. Let $\shm$ be a coherent $\shd_X$-module.
Locally on $X$, $\shm$ may be represented as the cokernel 
of a matrix $\cdot P_0$ of differential operators acting on the right: 
\eqn
&&\shm\simeq \shd_X^{N_0}/\shd_X^{N_1}\cdot P_0
\eneqn
and one shows that 
$\shm$ is locally isomorphic to the cohomology of a bounded complex
\eq\label{eq:globalpresent}
&&\shm^\bullet\eqdot
0\to \shd_X^{N_r}\to\cdots\to\shd_X^{N_1}\to[\cdot P_0]\shd_X^{N_0}\to 0.
\eneq
Clearly, $\sho_X$ is a left $\shd_X$-module. It is indeed coherent since $\sho_X\simeq\shd_X/\shi$ where $\shi$ is the left ideal generated by the vector fields. 
For  a coherent $\shd_X$-module $\shm$, one sets for short 
\eqn
&&\Sol(\shm)\eqdot\rhom[\shd_X](\shm,\sho_X).
\eneqn
Representing (locally) $\shm$ by a bounded complex $\shm^\bullet$ as above, 
we get
\eq\label{eq:solm}
&&\Sol(\shm)\simeq
0\to \sho_X^{N_0}\to[P_0\cdot]\sho_X^{N_1}\to\cdots\sho_X^{N_r}\to 0,
\eneq
where now $P_0\cdot$ operates on the left.

Hence a coherent $\shd_X$-module is nothing but  a system of linear partial differential equations. 

To a coherent $\shd_X$-module $\shm$ is associated its characteristic variety $\chv(\shm)$, a closed analytic $\C^\times$-conic co-isotropic subset of $T^*X$. 
\begin{theorem}[{see~\cite{KS90}*{Th.~11.3.3}}]
Let $\shm$ be a coherent $\shd_X$-module. Then $\musupp(\Sol(\shm))=\chv(\shm)$.
\end{theorem}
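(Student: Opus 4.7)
The question is local, so we may replace $X$ by a small open set and work with a bounded free resolution $\shm^\bullet$ of $\shm$ as in~\eqref{eq:globalpresent}, so that $\Sol(\shm)$ is quasi-isomorphic to the complex~\eqref{eq:solm}. Both sides of the desired equality are well defined closed conic subsets of $T^*X$, and we prove the two inclusions separately.

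\textbf{The inclusion} $\musupp(\Sol(\shm))\subset\chv(\shm)$. Using the triangle inequality for the microsupport along distinguished triangles together with the natural filtration of $\shm^\bullet$, one reduces by induction on the length of the resolution to the case of a single coherent operator, namely $\shm=\shd_X/\shd_X\cdot P$ for a differential operator $P$ of order $m$ with principal symbol $\sigma_m(P)$. Here $\chv(\shm)=\sigma_m(P)^{-1}(0)\cup T^*_XX$, and $\Sol(\shm)$ is represented by the two-term complex $\sho_X\to[P]\sho_X$. Given $(x_0;\xi_0)\notin\chv(\shm)$, I would argue that $P$ acts as a microlocal isomorphism on $\sho_X$ at $(x_0;\xi_0)$: the case $\xi_0=0$ is the classical Cauchy--Kowalevski theorem, and for $\xi_0\ne 0$ one chooses a real hypersurface $N$ through $x_0$ with conormal $\xi_0$, notes that $N$ is non-characteristic for $P$, and applies the holomorphic Cauchy--Kowalevski--Kashiwara theorem together with the propagation theorem for the microsupport of $\sho_X$ in the $\xi_0$-direction. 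This vanishing of the stalks of $\mu hom(\cor_U,\Sol(\shm))$ at $(x_0;\xi_0)$ for a suitable family of test opens $U$ yields $(x_0;\xi_0)\notin\musupp(\Sol(\shm))$.

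\textbf{The inclusion} $\chv(\shm)\subset\musupp(\Sol(\shm))$. This is the harder direction. The plan is to upgrade the previous argument to the ring $\she_X$ of microdifferential operators on $T^*X$: by construction, $\chv(\shm)$ is the support of $\she_X\tens_{\opb{\pi_X}\shd_X}\opb{\pi_X}\shm$. I would use the faithful flatness of $\she_X$ over $\opb{\pi_X}\shd_X$ on $\dot T^*X$ to reduce the problem to showing that, microlocally at any $(x_0;\xi_0)\in\chv(\shm)$ with $\xi_0\ne 0$, a coherent $\she_X$-module $\shn$ with $\shn\ne 0$ cannot have vanishing microlocal solutions in $\muhom(\C_X,\sho_X)$. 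This is established via the Sato--Kawai--Kashiwara quantization argument, which puts $\shn$ locally in a simple form after conjugation by a quantized contact transformation, reducing it to the partial de Rham complex where the non-vanishing of microfunction solutions is a direct computation. The zero-section case $\xi_0=0$ is handled by the usual Nullstellensatz for $\shd_X$-modules via $\sho_X$.

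\textbf{Main obstacle.} The delicate point is the second inclusion; the first merely systematizes Cauchy--Kowalevski. The second forces one to enter the theory of microdifferential operators, quantized contact transformations and the faithful flatness of $\she_X$. I would therefore devote most of the write-up to carefully importing those tools (or, as the paper does, to quoting~\cite{KS90}*{Th.~11.3.3} once the setup is in place).
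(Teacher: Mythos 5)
The paper does not prove this statement at all: it is quoted verbatim from \cite{KS90}*{Th.~11.3.3} (Kashiwara's theorem), so there is no internal proof to compare with. On the merits of your sketch, the plan for the reverse inclusion $\chv(\shm)\subset\musupp(\Sol(\shm))$ is the standard one (faithful flatness of $\she_X$ over $\opb{\pi_X}\shd_X$ off the zero-section, reduction to a nonvanishing statement for microfunction solutions of a nonzero coherent $\she_X$-module, settled by the SKK quantized contact transformation argument), and you are right that this is where the real work lies.

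However, your argument for the easier inclusion $\musupp(\Sol(\shm))\subset\chv(\shm)$ contains a genuine gap at the very first step. The ``natural filtration of $\shm^\bullet$'' is the stupid filtration of the free resolution $0\to\shd_X^{N_r}\to\cdots\to\shd_X^{N_0}\to 0$, whose graded pieces are the free modules $\shd_X^{N_i}$. Applying $\rhom[\shd_X](\scbul,\sho_X)$ turns these into $\sho_X^{N_i}$, and the microsupport of $\sho_X$ is \emph{all} of $T^*X$, not the zero-section (as one sees already on $X=\C$: $H^1_{\{\mathrm{Im}\,z\le 0\}}(\sho_\C)_0\ne 0$, since boundary values of holomorphic functions on a half-disk do not all extend). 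Consequently the triangle inequality for microsupports along this filtration only yields the vacuous estimate $\musupp(\Sol(\shm))\subset T^*X$, and there is no reduction whatsoever to the case $\shm=\shd_X/\shd_X\cdot P$. In fact no filtration of a free resolution can isolate ``a single operator'': the entire microlocal content of $\Sol(\shm)$ sits in the differentials, not in the terms. (This also makes the theorem \emph{consistent} with the example $\shm=\shd_X$, where $\Sol=\sho_X$ and $\chv=T^*X$; a ``reduction to a single operator'' would have to reproduce this.) The correct proof of this inclusion treats the full matrix $P_0$ at once and is again most naturally carried out via $\she_X$: away from the zero-section, if $(x_0;\xi_0)\notin\chv(\shm)=\supp(\she_X\shm)$ then $\she_X\shm$ vanishes near $(x_0;\xi_0)$, and one uses the $\she_X$-module structure on $\muhom(\RD'_X\C_X,\sho_X)$ together with the compatibility $\muhom(\RD'_XG,\Sol(\shm))\simeq\rhom[\she_X](\she_X\shm,\muhom(\RD'_XG,\sho_X))$ to conclude that $(x_0;\xi_0)\notin\musupp(\Sol(\shm))$; alternatively one runs a non-characteristic deformation argument for the whole system $P_0$. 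Either way, both inclusions pass through the microdifferential machinery, not just the hard one, and you should not present the first as a corollary of the cyclic case.
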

Let $f\cl Y\to X$ be a morphism of complex manifolds. One can define 
the inverse image $\dopb{f}\shm$, an object of $\Derb(\shd_Y)$.
The Cauchy-Kowalevska theorem has been extended to D-modules in Kashiwara's thesis of 1970. 

\begin{theorem}[{see~\cite{Ka70, Ka03}}]\label{th:CKK}
Let $\shm$ be a coherent $\shd_X$-module and 
assume that $f$ is non characteristic for $\shm$,  that is, for $\chv(\shm)$. Then  
\bnum
\item
$\dopb{f}(\shm)$ is concentrated in degree $0$ and is a coherent $\shd_Y$-module,
\item
$\chv(\dopb{f}(\shm))= f_d\opb{f_\pi}\chv(\shm)$,
\item
one has a natural isomorphism $\opb{f}\rhom[\shd_X](\shm,\sho_X)\isoto\rhom[\shd_Y](\dopb{f}\shm,\sho_Y)$.
\enum
\end{theorem}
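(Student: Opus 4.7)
The plan is to follow the standard two-step reduction: first to closed embeddings, then to smooth hypersurfaces. I would factor $f\cl Y\to X$ as $Y\to[i_f] Y\times X\to[p] X$ where $i_f$ is the graph embedding and $p$ the second projection. Since $p$ is smooth, $p_d$ is a closed embedding and $p$ is automatically non-characteristic for any coherent $\shd_X$-module; moreover $\dopb{p}\shm\simeq\sho_{Y\times X}\tens[\opb{p}\sho_X]\opb{p}\shm$ by flat base change, so statements (i)--(iii) are immediate for $p$. It therefore suffices to treat the closed embedding $i_f$, so I may assume $f$ itself is a closed embedding.

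For a closed embedding $i\cl Y\into X$ I would reduce to the hypersurface case by induction on $\codim_XY$: locally the embedding factors as a sequence of hypersurface embeddings, and the non-characteristic hypothesis together with conclusions (i)--(iii) behave compatibly under composition (using (ii) at each stage to track $\chv$ of the intermediate pullback). So assume $Y=\{z_n=0\}$ for a local coordinate system $(z_1,\ldots,z_n)$ on $X$. The central ingredient is a preparation theorem of Kashiwara: non-characteristicity of $\shm$ for $Y$ is equivalent to the existence, locally near each point of $Y$, of a finite free resolution
\eqn
&&0\to\shd_X^{N_r}\to\cdots\to\shd_X^{N_0}\to\shm\to0
\eneqn
in which each matrix of operators, filtered by $\partial_{z_n}$-order, has invertible leading symbol upon restriction to $Y$. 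This is a noncommutative analogue of Weierstrass division along $Y$ and is the main technical input.

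Given such a presentation, (i) and (ii) follow by applying $\dopb{i}$ termwise: the preparation guarantees the resulting complex is acyclic in negative degrees, so $\dopb{i}\shm$ is concentrated in degree $0$ and coherent over $\shd_Y$, while tracking the leading symbols yields the characteristic variety formula $\chv(\dopb{i}\shm)=i_d\opb{i_\pi}\chv(\shm)$. For (iii) one reduces further to the single-operator case $\shm\simeq\shd_X/\shd_X P$ with $P$ of order $m$ and principal symbol non-vanishing on $T_Y^*X\setminus Y$. There the required isomorphism $\opb{i}\Sol(\shm)\isoto\Sol(\dopb{i}\shm)$ is the classical holomorphic Cauchy--Kowalevska theorem: a solution of $Pu=0$ defined near $Y$ is uniquely determined by, and can be prescribed via, its first $m$ normal derivatives on $Y$. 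A diagram chase through the resolution upgrades this to the full complex statement.

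The main obstacle is the preparation theorem: producing a resolution whose matrices have invertible leading symbol on $Y$ is a delicate statement about coherent modules over the noncommutative ring $\shd_X$, and is precisely where the non-characteristic hypothesis enters in an essential way (via the $V$-filtration on $\shd_X$ along $Y$). Once this is in hand, (i) and (ii) are essentially formal consequences, and the analytic substance of (iii) reduces to the classical scalar Cauchy--Kowalevska theorem at the base of the induction.
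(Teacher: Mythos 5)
There is nothing in the paper to compare against: Theorem~\ref{th:CKK} is imported verbatim from Kashiwara (\cite{Ka70, Ka03}) and the paper gives no proof, so your sketch can only be measured against the classical argument in those references. Your outline does follow that classical route: factor $f$ through its graph, dispose of the smooth projection (where (iii) is the relative holomorphic Poincar\'e lemma for $\shd_{Y\times X\to X}$ --- standard, though not quite ``immediate''), reduce a closed embedding by induction on codimension to a hypersurface (here you should say why the intermediate hypersurface can be chosen non-characteristic: along $Y$ its conormal bundle sits inside $T^*_YX$, so non-characteristicity of $Y$ propagates, and conclusion (ii) at each stage keeps the induction going --- exactly as you indicate), and finally use division along a non-characteristic operator plus the scalar Cauchy--Kowalevska theorem. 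The one place where your sketch overstates the standard input is the ``preparation theorem'': the usual proof does not produce, and does not need, a finite free resolution all of whose differentials have invertible leading symbol on $Y$, nor is non-characteristicity \emph{equivalent} to the existence of such a resolution. What Kashiwara's argument actually uses is weaker: locally one covers $\shm$ by a finite direct sum $\bigoplus_j\shd_X/\shd_X P_j$ with each $P_j$ non-characteristic for $Y$ (possible because $\chv(\shm)$ misses the conormal direction, so some annihilating operator of each generator has nonvanishing symbol there), observes that the kernel is again coherent and non-characteristic, and runs a dimension-shifting/five-lemma induction in which the Sp\"ath--Weierstrass division theorem for $\shd_X$ and the classical scalar CK theorem handle the cyclic pieces. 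Similarly, for (ii) the inclusion $\chv(\dopb{f}\shm)\subset f_d\opb{f_\pi}\chv(\shm)$ comes from the induced good filtration, but the reverse inclusion is not mere ``tracking of leading symbols''; it needs a graded/multiplicity argument using that $f_d$ is finite on $\opb{f_\pi}\chv(\shm)$. With the preparation claim replaced by this standard covering-by-cyclic-modules lemma, your plan is the accepted proof; as written, that step is the gap you would have to fill.
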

\begin{example}
Assume $\shm=\shd_X/\shd_X\cdot P$ for a differential operator $P$ of order $m$ and $Y$ is a smooth hypersurface, non characteristic for $P$. Let $s=0$ be a reduced equation of $Y$. Then, 
 $\dopb{f}(\shm)\simeq\shd_Y/(s\cdot\shd_Y+\shd_X\cdot P)$ and it follows from the Weierstrass division theorem that, locally, 
$\dopb{f}\shm\simeq\shd_Y^m$. In this case, isomorphism (iii) in the above theorem is nothing but the Cauchy-Kowalevska theorem. 
\end{example}

\begin{definition}
Let $\shm$ be a coherent $\shd_X$-module and let $L\subset X$ be a real submanifold. One says that the pair $(L,\shm)$ is elliptic if $
\chv(\shm)\cap T^*_LX\subset T^*_XX$.
\end{definition}
If  $X$ is a complexification of a real  manifold $M$,  the pair $(M,\shm)$ is elliptic if and only if $\shm$ is elliptic in the usual sense and Corollary~\ref{cor:ell} gives, for $F=\Sol(\shm)$, the isomorphism
\eq\label{eq:elliptic1}
&&\rhom[\shd_X](\shm,\sha_M)\isoto\rhom[\shd_X](\shm,\shb_M).
\eneq
In particular, the hyperfunction solutions of the system $\shm$ are real analytic. 
More generally, we have
\begin{theorem}[\cite{Sa70}]
Let $\shm$ be a coherent $\shd_X$-module and 
let $u\in\sect(M;\hom[\shd_X](\shm,\shb_M))$. Then 
$\WF(u)\subset T^*_MX\cap\chv(\shm)$. 
\end{theorem}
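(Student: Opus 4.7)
The strategy is to rewrite $\hom[\shd_X](\shm,\shb_M)$ and $\hom[\shd_X](\shm,\shc_M)$ as $\rhom$ and $\muhom$ applied to $\Sol(\shm)$, and then to bound the support of the $\muhom$ by means of the standard microsupport estimate. Since $\shc_M$ is already a sheaf on $T^*_MX$, we get the inclusion $\WF(u)\subset T^*_MX$ for free; the content of the theorem is the bound by $\chv(\shm)$.

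\textbf{Step 1 (Commuting $\rhom[\shd_X](\shm,\,\cdot\,)$ with $\rhom$ and $\muhom$).} Working locally, we may assume $\shm$ is represented by a bounded complex of free $\shd_X$-modules of finite rank as in~\eqref{eq:globalpresent}. Since both $\rhom(\RD'_X\C_M,\,\cdot\,)$ and $\muhom(\RD'_X\C_M,\,\cdot\,)$ commute with finite direct sums (and with the differentials of such a complex), we obtain natural isomorphisms
\[
\rhom[\shd_X](\shm,\shb_M)\simeq\rhom(\RD'_X\C_M,\Sol(\shm)),\qquad
\rhom[\shd_X](\shm,\shc_M)\simeq\muhom(\RD'_X\C_M,\Sol(\shm)),
\]
using the defining formulas $\shb_M=\rhom(\RD'_X\C_M,\sho_X)$ and $\shc_M=\muhom(\RD'_X\C_M,\sho_X)$. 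Taking $H^0$ in the second isomorphism, the section $\spec(u)\in\sect(T^*_MX;\hom[\shd_X](\shm,\shc_M))$ corresponds to the image of $u$ under the natural specialization morphism $\rhom\to\roim{\pi_M}\muhom$ applied to the pair $(\RD'_X\C_M,\Sol(\shm))$.

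\textbf{Step 2 (Support estimate).} By the general estimate for $\muhom$ (\cite{KS90}*{Cor.~5.4.10}), for any $F,G\in\Derb(\C_X)$ one has
\[
\supp\,\muhom(F,G)\subset\musupp(F)\cap\musupp(G).
\]
Apply this with $F=\RD'_X\C_M$ and $G=\Sol(\shm)$. Since $\musupp(\C_M)=T^*_MX$ and duality preserves the microsupport (up to taking the antipodal, which is harmless here as $T^*_MX$ is stable under the antipodal map), we have $\musupp(\RD'_X\C_M)=T^*_MX$. By the theorem of Kashiwara--Schapira quoted just above, $\musupp(\Sol(\shm))=\chv(\shm)$.

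\textbf{Step 3 (Conclusion).} Combining Steps~1 and~2,
\[
\supp\bigl(\hom[\shd_X](\shm,\shc_M)\bigr)\subset \supp\,\muhom(\RD'_X\C_M,\Sol(\shm))\subset T^*_MX\cap\chv(\shm),
\]
and therefore $\WF(u)=\supp(\spec(u))\subset T^*_MX\cap\chv(\shm)$.

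The only nontrivial point is Step~1, namely the identification of the $\shd_X$-linear hom with $\sho_X$-coefficients inside $\muhom$. This is straightforward once one uses a local finite free resolution of $\shm$ over $\shd_X$ and the fact that $\muhom(\RD'_X\C_M,\,\cdot\,)$ is an additive functor that preserves such finite complexes; the support estimate in Step~2 is then the standard input from microlocal sheaf theory.
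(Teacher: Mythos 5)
Your argument is correct, but note that the paper offers no proof of this statement at all: it is quoted as a classical result with a citation to Sato's 1970 ICM address, whose original proof proceeds quite differently (via the structure of the sheaf $\shc_M$ and the theory of (micro)differential operators, not via microsupports). What you have written is the standard modern derivation in the spirit of Kashiwara--Schapira, and it fits well with the tools the paper already quotes: the identity $\musupp(\Sol(\shm))=\chv(\shm)$ is exactly the theorem cited just before the statement, and the support estimate $\supp\muhom(F,G)\subset\musupp(F)\cap\musupp(G)$ together with $\musupp(\RD'_X\C_M)=T^*_MX$ does the rest (your remark about the antipodal map is indeed harmless here). The one step that deserves to be stated as a citable fact rather than "straightforward" is the commutation isomorphism
\begin{equation*}
\rhom[\opb{\pi}\shd_X](\opb{\pi}\shm,\muhom(\RD'_X\C_M,\sho_X))\simeq\muhom(\RD'_X\C_M,\rhom[\shd_X](\shm,\sho_X)),
\end{equation*}
which is proved in \cite{KS90}, Chapter 11, by exactly the local finite free resolution argument you sketch; the only care needed is that a triangulated functor applied to a bounded complex is identified with the termwise complex (this works here because $\muhom(\RD'_X\C_M,\sho_X)\simeq\shc_M$ is concentrated in a single degree, so an induction on the length of the resolution goes through). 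You should also make explicit that $\WF(u)$ for a solution of a system means the union of the wave front sets of its components, and that these assemble, via the $\shd_X$-linearity of $\spec$, into the section of $\hom[\shd_X](\shm,\shc_M)$ whose support you bound; with that said, your Steps 1--3 give a complete and correct proof, arguably more self-contained (relative to this paper's toolkit) than the bare citation the paper provides.
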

When $L=Y$ is a complex submanifold of complex codimension $d$, $(Y,\shm)$ is elliptic if and only if the embedding $Y\into X$ is non-characteristic for $\shm$.  In this case, Corollary~\ref{cor:ell} gives the isomorphism
\eq\label{eq:elliptic2}
&&\opb{f}\rhom[\shd_X](\shm,\sho_X)\isoto\rhom[\shd_X](\shm,\rsect_Y\sho_X)\,[2d].
\eneq

\section{Wick rotation for D-modules}\label{section:wick2}

\subsection{Hyperbolic D-modules}
Let $M$ be a real  manifold and let $X$ be a complexification of $M$. Recall the embedding $T^*M\into T^*T^*_MX$ 
of~\eqref{eq:hyp1} and recall that for  $S\subset T^*X$, the Whitney cone $C_{T^*_MX}(S)$ is contained in $T_{T^*_MX}T^*X\simeq T^*T^*_MX$. 
The next definition is extracted form~\cite{KS90}. See~\cite{Sc13} for details. 
\begin{definition}
Let $\shm$ be a coherent left $\shd_X$-module.
\banum
\item
We set
\eq\label{eq:hyp2}
&&\hchv_M(\shm)=T^*M\cap C_{T^*_MX}(\chv(\shm))
\eneq
and call $\hchv_M(\shm)$ the \em{hyperbolic characteristic variety} of $\shm$ along $M$.
\item
A vector $\theta\in T^*M$ such that $\theta\notin\hchv_M(\shm)$ is called \em{hyperbolic} with respect to $\shm$.
\item
A submanifold $N$ of $M$ is called \em{hyperbolic} for $\shm$ if
\eq\label{eq:hyp3}
&&T^*_NM\cap\hchv_M(\shm)\subset T^*_MM,
\eneq
that is, any nonzero vector of $T^*_NM$ is hyperbolic for $\shm$.
\item
For a differential operator $P$, we set $\hchv(P) = \hchv_M(\shd_X / \shd_X \cdot P)$.
\eanum
\end{definition}
\begin{example}
Assume we have a local coordinate system $(x+\sqrt{-1}y)$ on $X$ with $M=\{y=0\}$ and let $(x+\sqrt{-1}y;\xi+\sqrt{-1}\eta)$ be the coordinates on $T^*X$ so that $T^*_MX=\{y=\xi=0\}$.
Let $(x_0;\theta_0)\in T^*M$ with $\theta_0\not=0$.
Let $P$ be a differential operator with principal symbol $\sigma(P)$.
Then $(x_0;\theta_0)$ is hyperbolic for $P$ if and only if
\eq\label{eq:hyp4}
&&\left\{\parbox{65ex}{
there exist an open neighborhood $U$ of $x_0$ in $M$ and an open conic neighborhood $\gamma$ of $\theta_0\in\R^n$ such that $\sigma(P)(x;\theta+\sqrt{-1}\eta)\not=0$ for all $\eta\in\R^n$, $x\in U$ and $\theta\in\gamma$.
}\right.
\eneq
As noticed by M.~Kashiwara, it follows from the local Bochner's tube theorem that Condition~\eqref{eq:hyp4} can be simplified:
$(x_0;\theta_0)$ is hyperbolic for $P$ if and only if
\eq\label{eq:hyp4K}
&&\left\{\parbox{65ex}{
there exists an open neighborhood $U$ of $x_0$ in $M$ such that $\sigma(P)(x;\theta_0+\sqrt{-1}\eta)\not=0$ for all $\eta\in\R^n$, and $x\in U$.
}\right.
\eneq
Hence, one recovers the classical notion of a (weakly) hyperbolic operator.
\end{example}
\begin{notation}
As usual, we shall 
write $\rhom[\shd_X](\shm,\shc_M)$  instead of $\rhom[\opb{\pi}\shd_X](\opb{\pi}\shm,\shc_M)$  and similarly with other sheaves on cotangent bundles.
\end{notation}

\subsection{Main tool}\label{section:wick2}

Consider as above a real manifold $M$ and a complexification $X$ of $M$,  a closed submanifold $N$ of $M$, 
and $Y$ a complexification of $N$ in $X$. Denote as above by $f\cl Y\into X$ the embedding. 
Consider  also another closed real submanifold $L\subset X$ such that 
$L\cap M=N$ and the intersection  is clean. Denote by $g\cl L\into X$ the embedding and consider the Diagram~\ref{diag:671}.

Let $\shm$ be a coherent $\shd_X$-module and consider the hypotheses:
\eq
&&\parbox{70ex}{the pair $(L,\shm)$ is elliptic, }\label{hyp:1}\\
&&\parbox{70ex}{the submanifold $N$ is hyperbolic for $\shm$ on $M$,}\label{hyp:2}\\
&&\parbox{70ex}{$Y$ is non characteristic for $\shm$}.\label{hyp:3}
\eneq
Set $F=\rhom[\shd_X](\shm,\sho_X)$. Then hypothesis (a)  of Theorem~\ref{th:671} is translated as hypothesis~\eqref{hyp:1}
and hypothesis (b) is translated as hypothesis~\eqref{hyp:2}. 

We shall constantly use the next result.
\begin{lemma}[{see~\cite{JS16}*{Lem.~3.5}}]
Hypothesis~\eqref{hyp:2} implies hypothesis~\eqref{hyp:3}.
\end{lemma}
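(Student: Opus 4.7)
I argue by contraposition: assuming $Y$ fails to be non-characteristic for $\shm$ in every neighborhood of $N$, I will produce a nonzero element of $T^*_NM\cap\hchv_M(\shm)$, violating hypothesis~\eqref{hyp:2}. The proof rests on two ingredients: $\C^\times$-conicity of $\chv(\shm)$ on the fibers of $T^*X$, and the symplectic identification of $T^*M$ with a subbundle of the normal bundle $T_{T^*_MX}T^*X$ used to define $\hchv_M(\shm)$.

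Choose holomorphic coordinates $(z',z'')=(x'+\sqrt{-1}y',x''+\sqrt{-1}y'')$ on $X$ adapted to the geometry, so that
\eqn
&& M=\{y=0\},\quad N=\{x''=0,\ y=0\},\quad Y=\{z''=0\}.
\eneqn
Writing the fiber coordinate of $T^*X$ as $\zeta=\xi+\sqrt{-1}\eta$, one has $T^*_MX=\{y=0,\ \xi=0\}$ and $T^*_YX=\{z''=0,\ \zeta'=0\}$. A direct computation with the canonical symplectic form shows that the embedding $T^*M\into T^*T^*_MX\simeq T_{T^*_MX}T^*X$ of~\eqref{eq:hyp1} sends $\theta\in T^*_xM$ to the normal direction with components $(\delta y,\delta\xi)=(0,\theta)$ at the point $(x;\eta=0)\in T^*_MX$.

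Failure of~\eqref{hyp:3} in every neighborhood of $N$ yields a sequence $p_n=(z'_n,0;0,\zeta''_n)\in\chv(\shm)$ with $\zeta''_n\neq 0$ and base point converging to some $x'_0\in N$. By $\C^\times$-conicity on the fibers I normalize $|\zeta''_n|=1$, extract a convergent subsequence, and use closedness of $\chv(\shm)$ and $T^*_YX$ to obtain $p_0=(x'_0,0;0,\zeta''_0)\in\chv(\shm)$ with $\zeta''_0=\alpha+\sqrt{-1}\beta$ of unit norm. Multiplying $\zeta''_0$ by $-\sqrt{-1}\in\C^\times$ if needed (which converts the purely imaginary covector $\sqrt{-1}\beta$ into the real covector $\beta$), I may assume $\alpha\neq 0$. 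A further use of $\C^\times$-conicity produces $q_\epsilon\eqdot(x'_0,0;0,\epsilon\zeta''_0)\in\chv(\shm)$ for every $\epsilon>0$, whose real-coordinate components are $y=0$, $\xi=(0,\epsilon\alpha)$, $\eta=(0,\epsilon\beta)$. Setting $a_\epsilon\eqdot(x'_0,0;0,0)\in T^*_MX$ and $c_\epsilon\eqdot 1/\epsilon$, as $\epsilon\to 0^+$ both $q_\epsilon$ and $a_\epsilon$ tend to $p\eqdot(x'_0,0;0,0)\in T^*_MX$, and the projection of $c_\epsilon(q_\epsilon-a_\epsilon)$ in the normal bundle $T_{T^*_MX}T^*X$ (i.e.\ modulo the tangent directions $\partial_x,\partial_\eta$) is the constant normal vector $(\delta y,\delta\xi)=(0,(0,\alpha))$. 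By definition of the Whitney cone this shows $(0,(0,\alpha))\in C_{T^*_MX}(\chv(\shm))|_p$; via the embedding of~\eqref{eq:hyp1} this is the image of $\theta\eqdot(0,\alpha)\in T^*_NM$. Since $\alpha\neq 0$, $\theta$ is a nonzero element of $T^*_NM\cap\hchv_M(\shm)$, contradicting~\eqref{hyp:2}.

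\textbf{Main obstacle.} The substantive content is purely bookkeeping: tracking, through the symplectic identification $T^*T^*_MX\simeq T_{T^*_MX}T^*X$, how a complex characteristic covector $(0,\zeta'')\in T^*_YX$ of $Y$ yields a real hyperbolic covector of $M$ transverse to $N$. The real part of $\zeta''$ plays the role of the symbol variable $\xi$, which becomes the normal component $\delta\xi$ over the zero-section of $T^*_MX$, while the imaginary part is absorbed into the tangent directions of $T^*_MX$ that are quotiented out when passing to the Whitney cone. Once this dictionary is made explicit, the $\C^\times$-conicity of $\chv(\shm)$ does all the work by allowing one first to rotate any hypothetical complex characteristic covector of $Y$ onto the real axis, then to rescale it toward the zero-section of $T^*_MX$.
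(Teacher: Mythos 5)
Your proof is correct, and it is essentially the intended argument: the paper gives no proof of this lemma (it simply cites \cite{JS16}*{Lem.~3.5}), and your contrapositive normal-cone computation—using $\C^\times$-conicity of $\chv(\shm)$ first to rotate a nonzero covector of $\chv(\shm)\cap T^*_YX$ so that its real part $\alpha$ is nonzero, then to rescale it toward the zero-section of $T^*_MX$ and read off a nonzero element of $T^*_NM\cap\hchv_M(\shm)$—is the same scaling device the paper itself employs in the proof of Lemma~\ref{le:671}, here applied to the characteristic variety. The only point worth making explicit is that the conclusion (as in \cite{JS16}) is non-characteristicity of $Y$ in a neighborhood of $N$, which is exactly what your limit-point extraction over base points tending to $x'_0\in N$ delivers and is all that the paper uses.
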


\begin{theorem} \label{th:671D}
Let $\shm$ be a coherent left $\shd_X$-module. Assume~\eqref{hyp:1} and~\eqref{hyp:2}.
Then one has the natural isomorphism
\eqn
&&\xymatrix{
\reim{g_{Nd}}\opb{g_{N\pi}}\rhom[\shd_X](\shm,\shc_M)\ar[r]^-\sim
      &\mu_N(\omega_{L/N}\tens\opb{g}\rhom[\shd_X](\shm,\sho_X)).
}\eneqn

\end{theorem}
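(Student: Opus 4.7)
The strategy is to apply Theorem~\ref{th:671} to the solution complex
$F \eqdot \Sol(\shm) = \rhom[\shd_X](\shm,\sho_X)$, and then rewrite both sides of the resulting isomorphism \eqref{eq:isowick1} in the language of the sheaves $\shc_M$ of microfunctions. Since $\musupp(F) = \chv(\shm)$, the geometry of the microsupport is entirely controlled by the characteristic variety of $\shm$.

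\smallskip
First I would verify the three hypotheses of Theorem~\ref{th:671} with $F = \Sol(\shm)$.
Hypothesis (a) asks that $g$ be non-characteristic for $F$, i.e.\
$\chv(\shm) \cap T^*_L X \subset T^*_X X$; this is precisely the ellipticity hypothesis \eqref{hyp:1}.
Hypothesis (b) asks that $g_{N\pi}$ be non-characteristic for $C_{T^*_M X}(\chv(\shm))$. Using the embedding $T^*M\hookrightarrow T^*T^*_MX$ of \eqref{eq:hyp1}, the conormal directions to $g_{N\pi}$ correspond exactly to $T^*_NM$, so condition (b) becomes $T^*_N M \cap \hchv_M(\shm) \subset T^*_M M$, which is the hyperbolicity hypothesis \eqref{hyp:2}. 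Finally, hypothesis (c) is automatic by Lemma~\ref{le:671}: $g\cl L\into X$ is a closed embedding, $N = L\cap M$, and the intersection is clean. Note that \eqref{hyp:3}, which is needed to make sense of the right-hand side via Theorem~\ref{th:CKK}, is supplied by Lemma~2.9 (``\eqref{hyp:2} implies \eqref{hyp:3}'').

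\smallskip
Applying Theorem~\ref{th:671} to $F$ then yields the isomorphism
\[
\reim{g_{Nd}}\bl\omega_{N/M}\tens\opb{g_{N\pi}}\mu_M(F)\br
\;\isoto\;
\mu_N\bl\omega_{L/X}\tens\opb{g}F\br .
\]
To rewrite the left-hand side, I would use
$\shc_M \simeq \mu_M(\sho_X)\tens \omega_M$ (from the definition of microfunctions recalled in Section~2.2), which gives $\rhom[\shd_X](\shm,\shc_M) \simeq \mu_M(F)\tens\omega_M$, and thus $\opb{g_{N\pi}}\rhom[\shd_X](\shm,\shc_M)\simeq \opb{g_{N\pi}}\mu_M(F)\tens\omega_M$.

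\smallskip
The remaining task is the orientation bookkeeping, which I expect to be the main (though essentially formal) obstacle. One needs to check that tensoring with the locally constant sheaf $\omega_M\otimes\omega_{N/M}^{\otimes-1}$ on the left matches tensoring with $\omega_{L/N}\otimes\omega_{L/X}^{\otimes-1}$ on the right, so that both are transported through the isomorphism. A direct computation gives both ratios equal to $\omega_M^{\otimes 2}\tens\omega_N^{\otimes-1}$ on $N$, once one uses the canonical isomorphism $\omega_X\vert_N \simeq \omega_M^{\otimes 2}\vert_N$ (valid because $X$ is a complexification of $M$, so the orientation sheaf of $X$ is canonically trivial and $\dim_\R X = 2\dim_\R M$). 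A parallel identification $\omega_X\vert_L\simeq \omega_L^{\otimes 2}$ explains the consistency of the formulas on the side of $L$. Once this identification of twists is made, the isomorphism of Theorem~\ref{th:671} becomes precisely the claimed isomorphism of the theorem.
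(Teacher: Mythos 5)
Your proposal is correct and follows essentially the same route as the paper: apply Theorem~\ref{th:671} together with Lemma~\ref{le:671} to $F=\Sol(\shm)$, translate hypotheses (a), (b), (c) into \eqref{hyp:1}, \eqref{hyp:2}, and the clean-intersection hypothesis, and then absorb the orientation twists using $\shc_M\simeq\omega_{X/M}\tens\mu_M(\sho_X)$ (equivalently $\omega_M\tens\mu_M(\sho_X)$, since $\omega_{X/M}\simeq\omega_M$ on a real form). The paper handles the twist bookkeeping more compactly, replacing $\omega_{N/M}$ by $\omega_{X/M}$ on the left and $\omega_{L/X}$ by $\omega_{L/N}$ on the right — both changes amounting to a tensor with the same locally constant sheaf $\omega_{X/N}$ — but this is just a cleaner rendering of the computation you carry out explicitly with the relation $\omega_X\vert_N\simeq\omega_M^{\otimes 2}\vert_N$.
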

\begin{proof}
Apply Theorem~\ref{th:671} together with Lemma~\ref{le:671} to the sheaf   $F=\rhom[\shd_X](\shm,\sho_X)$.
 We get:
\eqn
&&\reim{g_{Nd}}(\omega_{N/M}\tens\opb{g_{N\pi}}\rhom[\shd_X](\shm,\mu_M(\sho_X)))
\simeq \mu_N(\omega_{L/X}\tens\opb{g}\rhom[\shd_X](\shm,\sho_X)).
\eneqn
Equivalently, we have
\eqn 
&&\reim{g_{Nd}}\opb{g_{N\pi}}(\omega_{X/M}\tens\rhom[\shd_X](\shm,\mu_M(\sho_X)))
\simeq \mu_N(\omega_{L/N}\tens\opb{g}\rhom[\shd_X](\shm,\sho_X)).
\eneqn
Finally $\omega_{X/M}\tens\mu_M(\sho_X)\simeq\shc_M$.
\end{proof}

\subsubsection*{Example 1: Cauchy problem for microfunctions}
Let $M$, $X$, $L$, $N$ and $f$ be as above and assume that 
 $L=Y$, hence $f=g$. 

\begin{corollary} \label{cor:671D1}
Let $\shm$ be a coherent left $\shd_X$-module. Assume~\eqref{hyp:2}.
Then one has the natural isomorphism
\eqn
&&\eim{f_{Nd}}\opb{f_{N\pi}}\rhom[\shd_X](\shm,\shc_M)\simeq \rhom[\shd_Y](\dopb{f}\shm,\shc_N).
\eneqn
\end{corollary}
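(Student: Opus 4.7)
The strategy is to derive this from Theorem~\ref{th:671D} in the specialization $L=Y$ (so $g=f$), and then rewrite the right-hand side using the Cauchy-Kowalevska theorem. First I would verify that the hypotheses of Theorem~\ref{th:671D} are satisfied: since $L=Y$ is a \emph{complex} submanifold of $X$, the pair $(Y,\shm)$ is elliptic if and only if $f\cl Y\into X$ is non-characteristic for $\shm$, i.e.\ condition~\eqref{hyp:3} holds; but hypothesis~\eqref{hyp:2} already implies~\eqref{hyp:3} by the lemma cited from \cite{JS16}*{Lem.~3.5}. Hence~\eqref{hyp:1} also holds and Theorem~\ref{th:671D} yields
\begin{equation*}
\reim{f_{Nd}}\opb{f_{N\pi}}\rhom[\shd_X](\shm,\shc_M)
\;\simeq\;
\mu_N\bigl(\omega_{Y/N}\tens\opb{f}\rhom[\shd_X](\shm,\sho_X)\bigr).
\end{equation*}

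Next I would invoke the Cauchy-Kowalevska theorem (Theorem~\ref{th:CKK}) applied to the non-characteristic embedding $f$: by (i) the object $\dopb{f}\shm$ is a coherent $\shd_Y$-module, and by (iii) there is a canonical isomorphism $\opb{f}\rhom[\shd_X](\shm,\sho_X)\simeq\rhom[\shd_Y](\dopb{f}\shm,\sho_Y)$. Since $\dopb{f}\shm$ admits, locally on $Y$, a bounded resolution by free $\shd_Y$-modules of finite rank, the functor $\rhom[\shd_Y](\dopb{f}\shm,-)$ commutes with the tensor product against the invertible object $\omega_{Y/N}$ and with the microlocalization $\mu_N$. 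Therefore
\begin{equation*}
\mu_N\bigl(\omega_{Y/N}\tens\rhom[\shd_Y](\dopb{f}\shm,\sho_Y)\bigr)
\;\simeq\;
\rhom[\shd_Y]\bigl(\dopb{f}\shm,\;\omega_{Y/N}\tens\mu_N(\sho_Y)\bigr).
\end{equation*}

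It remains to identify the inner object with $\shc_N$: since $Y$ is a complexification of $N$, one has $\shc_N=\muhom(\RD'_Y\C_N,\sho_Y)$ with $\RD'_Y\C_N\simeq\omega_{N/Y}$, so pulling the invertible $\omega_{N/Y}^{\otimes-1}=\omega_{Y/N}$ out of $\muhom$ gives $\shc_N\simeq\omega_{Y/N}\tens\mu_N(\sho_Y)$. Composing the three displayed isomorphisms yields the corollary. I do not anticipate a substantial obstacle: the argument is really the composition of Theorem~\ref{th:671D} with Cauchy-Kowalevska, and the remaining manipulations are routine properties of invertible dualizing complexes and of $\rhom$ against coherent $\shd_Y$-modules. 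The one genuine input of the hypothesis~\eqref{hyp:2} is in the first paragraph, where it simultaneously ensures non-characteristicity of $f$ and ellipticity of $(Y,\shm)$.
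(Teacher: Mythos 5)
Your argument follows essentially the same route as the paper: specialize Theorem~\ref{th:671D} to $L=Y$, $g=f$ (with~\eqref{hyp:2} supplying~\eqref{hyp:3} and hence the ellipticity of $(Y,\shm)$), apply Theorem~\ref{th:CKK}\,(iii), and identify $\omega_{Y/N}\tens\mu_N(\sho_Y)\simeq\shc_N$; all of this is correct. The only point you leave unaddressed is the passage from the derived functor $\reim{f_{Nd}}$ produced by Theorem~\ref{th:671D} to the non-derived $\eim{f_{Nd}}$ appearing in the statement, which the paper justifies by noting that $f_{Nd}$ is finite on $\chv(\shm)$, hence on the support of the sheaf being pushed forward.
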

\begin{proof}
Applying Theorem~\ref{th:CKK}, 
we get $\opb{f}\rhom[\shd_X](\shm,\sho_X)\simeq\rhom[\shd_Y](\dopb{f}\shm,\sho_Y)$. (Recall that~\eqref{hyp:2} implies~\eqref{hyp:3}.)
Moreover, $\omega_{Y/N}\tens\mu_N(\sho_Y)\simeq\shc_N$. Finally, since $f_{Nd}$ is finite on $\chv(\shm)$, we may replace $\reim{f_{Nd}}$ with $\eim{f_{Nd}}$. 
\end{proof}

\subsection{Boundary values}\label{subsection:bv}
Let $M$ be a real $n$-dimensional manifold, $N$ a closed submanifod of codimesnion $d$,  $X$ a complexification of $M$ and $Y$ a complexification of $N$ in $X$.  We denote by $f\cl Y\into X$ the embedding. 

\begin{notation}
We set
\eqn
&&\tw\shb_N=\rsect_N(\sho_X)\tens\ori_N\,[n]\simeq H^n_N(\sho_X)\tens\ori_N.
\eneqn
\end{notation}
We shall not confuse the sheaf $\tw\shb_N$  with the sheaf $\shb_N$ of hyperfunctions  on $N$.
We have an isomorphism
\eqn
&&\tw\shb_N\simeq \sect_N\shb_M\tens\ori_{N/M}\simeq  \sect_N\shb_M\tens\omega_{M/N}\,[-d].
\eneqn
Let $\shm$ be a coherent $\shd_X$-module. 
Applying the functor $\rsect_N(\scbul)\tens\ori_N\,[n-d]$ to the isomorphism (iii)  in Theorem~\ref{th:CKK} together with isomorphism~\eqref{eq:elliptic2} one recovers a well known result:
\begin{lemma}
Assume~\eqref{hyp:3}.
One has a natural isomorphism 
\eqn
&&\rhom[\shd_X](\shm,\tw\shb_N)\,[d]\simeq \rhom[\shd_Y](\dopb{f}\shm,\shb_N). 
\eneqn
\end{lemma}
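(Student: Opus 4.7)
The plan is the one suggested in the passage: chain together the isomorphism~\eqref{eq:elliptic2} with Theorem~\ref{th:CKK}(iii), then apply the functor $\rsect_N(\scbul)\tens\ori_N\,[n-d]$. Hypothesis~\eqref{hyp:3} says that $Y$ is non-characteristic for $\shm$, which is exactly what both ingredients require. Each of them expresses $\opb{f}\rhom[\shd_X](\shm,\sho_X)$ in a different form, so identifying the two expressions yields the auxiliary isomorphism
\eqn
&&\rhom[\shd_X](\shm,\rsect_Y\sho_X)\,[2d]\simeq\rhom[\shd_Y](\dopb{f}\shm,\sho_Y).
\eneqn

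Next I would apply $\rsect_N(\scbul)\tens\ori_N\,[n-d]$ to this identity. On the left, I commute $\rsect_N$ past $\rhom[\shd_X](\shm,\scbul)$ by the standard adjunction $\rsect_N\simeq\rhom(\cor_N,\scbul)$, and then use $\rsect_N\circ\rsect_Y\simeq\rsect_N$, valid because $N\subset Y$. Grouping the shifts so that $\rsect_N\sho_X\tens\ori_N\,[n]$ appears as the second argument identifies $\tw\shb_N$ by definition, and leaves an external shift $2d+(n-d)-n=d$, producing $\rhom[\shd_X](\shm,\tw\shb_N)\,[d]$. On the right, the real codimension of $N$ in the complex manifold $Y$ is $n-d$, so $\rsect_N\sho_Y$ is concentrated in degree $n-d$ and $\rsect_N\sho_Y\tens\ori_N\,[n-d]\simeq\shb_N$ is the standard definition of hyperfunctions on $N$; the right-hand side thus becomes $\rhom[\shd_Y](\dopb{f}\shm,\shb_N)$.

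No step is really an obstacle: the two deep ingredients, namely the non-characteristic restriction formula for sheaves and the Cauchy--Kowalevska theorem for D-modules, have already been invoked in the paper, and once they are combined the rest is bookkeeping. The most error-prone point, and the one I would write out most carefully, is the shift-and-orientation accounting on the left, where three different degree shifts and two factors of $\ori_N$ must collapse via $\ori_N\tens\ori_N\simeq\cor_N$ to leave precisely $[d]$ outside the $\rhom[\shd_X]$.
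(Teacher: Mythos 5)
Your proof is correct and follows exactly the paper's (very terse) indication: combine Theorem~\ref{th:CKK}(iii) with~\eqref{eq:elliptic2} to eliminate $\opb{f}\rhom[\shd_X](\shm,\sho_X)$, then apply $\rsect_N(\scbul)\tens\ori_N\,[n-d]$, commute $\rsect_N$ through the inner $\rhom$, and identify $\tw\shb_N$ on one side and $\shb_N$ on the other. One small inaccuracy in your caveat: with $\tw\shb_N$ taken directly from the definition $\rsect_N(\sho_X)\tens\ori_N\,[n]$, only a single factor of $\ori_N$ occurs on the left, and it is precisely the one the functor introduces, so no $\ori_N\tens\ori_N\simeq\cor_N$ cancellation is needed; the degree count $2d+(n-d)-n=d$ that you wrote is the only bookkeeping required.
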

Appying the functor $\RD'_X$ to the morphism $\C_M\to\C_N$, we get the morphism $\RD'_X(\C_N)\to\RD'_X(\C_M)$, that is, the morphism $\ori_N\,[d+n]\to \ori_M\,[n]$. 
Applying the functor $\rhom(\scbul,\sho_X)$ we get the ``restriction''  morphism
\eq\label{eq:bv1}
&&\rho_{MN}\cl \shb_M\to \tw\shb_N\,[d]\simeq\sect_N\shb_M\tens\omega_{M/N}.
\eneq
For a closed cone $\lambda\subset T^*_MX$, we set for short
\eq\label{eq:sectmu}
&&\shb_{M,\lambda}\eqdot\oim{\pi_M}\sect_{\lambda}\shc_M.
\eneq
For an open cone $\gamma\subset T_NM$, we set for short :
\eq\label{eq:sectnu}
&&\sect_\gamma\shb_{NM}\eqdot\oim{\tau_N}\sect_\gamma(\nu_N(\shb_M)).
\eneq
(In the sequel, we shall use this notation for another real manifold $L$ instead of $M$.)

Hence, for a closed convex proper cone $\delta\subset T^*_NM$ with $\delta\supset N$, setting $\gamma=\Int(\delta^{\circ a})$,  we have by~\eqref{eq:nuandmu}:
\eq\label{eq:nuandmu2}
&&\oim{\pi_N}\sect_{\delta}(\mu_N\shb_M)\tens\omega_{M/N}\simeq\sect_{\gamma}\shb_{NM}. 
\eneq
One can use~\eqref{eq:nuandmu2} and the morphism $\oim{\pi_N}\sect_{\delta}(\mu_N\shb_M)\to\oim{\pi_N}\mu_N\shb_M\simeq \sect_N\shb_M$ to obtain the morphism
\eq\label{eq:bv2}
&&b_{\gamma, N}\cl \sect_\gamma\shb_{NM}\to\sect_N\shb_M\tens\omega_{M/N}.
\eneq
One can also construct~\eqref{eq:bv2} directly as follows. Let $U$ be an open subset of $M$ such that $\ol U\supset N$, $U$ is 
locally cohomologically trivial (see~\cite{KS90}*{Exe.~III.4}). Then 
the morphism $\C_{\ol U}\to\C_N$ gives by duality the morphism $\ori_N\,[d+n]\to \ori_U\,[n]$ and one gets the morphism 
$\sect_U\shb_M\to \sect_N\shb_M\tens\omega_{M/N}$ by applying  $\rhom(\scbul,\sho_X)$ similarly as for $\rho_{MN}$.  
Taking the inductive limit with respect to the family of open sets $U$ such that $C_M(X\setminus U)\cap \gamma=\varnothing$
(see~\cite{KS90}*{Th.~4.2.3}), we recover the morphism~\eqref{eq:bv2}. 

In particular, for a coherent $\shd_X$-module $\shm$
we get the morphisms
\eqn
\rho_{MN}&\cl&\rhom[\shd_X](\shm,\shb_{M,\lambda})\to\rhom[\shd_X](\shm,\tw\shb_N)\,[d],\\
b_{\gamma, N}&\cl& \rhom[\shd_X](\shm,\sect_\gamma\shb_{NM})\to\rhom[\shd_X](\shm,\tw\shb_N)\,[d].
\eneqn

\subsection{Wick rotation}

Let $M$, $X$, $Y$, $N$, $L$, $f$ and $g$ be as above. Now, we also  assume that $L$ is a real manifold of the same dimension 
than $M$ and  $X$ is a complexification of $L$. We still consider diagram~\eqref{diag:671}.

Consider the hypothesis
\eq\label{hyp:4}
&&\left\{ \parbox{70 ex}{in a neighborhood of $N$, $\chv(\shm)\cap T^*_MX$ is contained in the union of two closed cones 
$\lambda$ and $\lambda^{\prime}$ such that $\lambda\cap\lambda^{\prime}=M\times_XT^*_XX.$}\right.
\eneq
(Here,  $M\times_XT^*_XX$ stands for the zero-section of $T^*_MX$.)
\begin{lemma}\label{le:4b}
Assume~\eqref{hyp:4}.Then we have the natural isomorphism
\eq\label{eq:4b}
&&\opb{g_{N\pi}}\rsect_{\lambda}\rhom[\shd_X](\shm,\shc_M)\isoto\rsect_{\opb{g_{N\pi}}(\lambda)}\opb{g_{N\pi}}\rhom[\shd_X](\shm,\shc_M).
\eneq
\end{lemma}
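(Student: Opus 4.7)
The strategy is Mayer--Vietoris, reducing the commutation to the zero-section case. Set $F := \rhom[\shd_X](\shm,\shc_M)$ and write $h := g_{N\pi}$ for brevity. Since $\supp F \subset \chv(\shm)\cap T^*_M X$, hypothesis~\eqref{hyp:4} implies $\supp F \subset \lambda\cup\lambda'$ in an open neighborhood of $N$; as both sides of the claimed isomorphism depend only on $F$ near $N$, I restrict to such a neighborhood throughout.

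From the distinguished triangle $\C_{\lambda\cup\lambda'}\to\C_\lambda\oplus\C_{\lambda'}\to\C_M\to[1]$ on $T^*_M X$ (using $\lambda\cap\lambda'=M$), applying $\rhom(\scbul,F)$ and $F\simeq\rsect_{\lambda\cup\lambda'}F$ gives the Mayer--Vietoris distinguished triangle
\[
\rsect_M F \to \rsect_\lambda F\oplus\rsect_{\lambda'}F \to F \to[1].
\]
Applying $\opb{h}$ and comparing with the analogous Mayer--Vietoris triangle for $\opb{h}F$ on $N\times_M T^*_M X$---whose support lies in $\opb{h}\lambda\cup\opb{h}\lambda'$ with intersection $\opb{h}M=N$, the zero section---yields a morphism of distinguished triangles whose rightmost component is the identity on $\opb{h}F$ and whose middle component is the direct sum (``diagonal'') of the natural transformations $\opb{h}\rsect_A F\to\rsect_{\opb{h}A}\opb{h}F$ for $A=\lambda,\lambda'$. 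The $5$-lemma then reduces the entire claim to the zero-section isomorphism
\[
\opb{h}\rsect_M F\isoto\rsect_N\opb{h}F,
\]
since, the middle map being diagonal, its being an isomorphism forces each diagonal summand (in particular that for $\lambda$, and as a bonus the one for $\lambda'$) to be an isomorphism.

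For this remaining step, consider the Cartesian square of closed embeddings
\[
\begin{array}{ccc}
N & \xrightarrow{i_N} & N\times_M T^*_M X\\
{\scriptstyle g}\downarrow && \downarrow{\scriptstyle h}\\
M & \xrightarrow{i_M} & T^*_M X
\end{array}
\]
with $i_M,i_N$ the zero-section embeddings. This square is transverse along $N$: the normal directions to $M$ and to $\pi_M^{-1}(N)=h(N\times_M T^*_M X)$ in $T^*_M X$ are complementary (the former are the vertical fibers of $\pi_M$, the latter is the pullback of normal directions of $N$ in $M$). Proper base change for the closed embedding $i_M$ yields $\opb{h}\rsect_M F\simeq\roim{i_N}\opb{g}\epb{i_M}F$, to be compared with $\rsect_N\opb{h}F=\roim{i_N}\epb{i_N}\opb{h}F$. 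The remaining identification $\opb{g}\epb{i_M}F\simeq\epb{i_N}\opb{h}F$ is the main obstacle and does not hold for arbitrary $F$: here one exploits that $F$ is $\R^+$-conic on the vector bundle $\pi_M$ (inherited from $\shc_M$) and that the cone condition $\lambda\cap\lambda'=M$ of hypothesis~\eqref{hyp:4} constrains the microsupport of $F$ so that $i_M$ is non-characteristic for $F$ in the directions transverse to $N$ inside $M$. Theorem~\ref{th:invim} then provides the identification after the compatible twist by relative dualizing sheaves along the transverse Cartesian square.
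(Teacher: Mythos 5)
Your Mayer--Vietoris strategy is a genuine alternative to the paper's argument and, apart from the final step, it is sound. The paper's proof works directly with the single cone~$\lambda$: it checks the comparison morphism is an isomorphism off the zero section (where $\lambda$ and $\lambda'$ become disjoint), then uses Sato's distinguished triangle $\reim{\pi_N}\to\roim{\pi_N}\to\oimdpi$ to reduce to a $\reim{\pi_N}$-level statement, which it establishes using $\reim{\pi_M}\simeq\epb{s_M}$ for conic sheaves plus base change and $\lambda\supset M$. Your route decomposes $F$ via the Mayer--Vietoris triangle $\rsect_M F\to\rsect_\lambda F\oplus\rsect_{\lambda'}F\to F$ and uses the direct-sum structure to deduce the isomorphism for each cone once the zero-section case $\opb{h}\rsect_M F\isoto\rsect_N\opb{h}F$ is known; this reduction is correct and arguably cleaner, and it cleanly isolates where hypothesis~\eqref{hyp:4} enters (namely, only in writing down the two Mayer--Vietoris triangles).

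However, your justification of the zero-section isomorphism is wrong. You claim that the conic structure of $F$ together with~\eqref{hyp:4} forces $i_M$ (the zero-section embedding of $M$ into $T^*_M X$) to be non-characteristic for $F$, so that Theorem~\ref{th:invim} applies. This is false: take $F=\C_M$ (the constant sheaf on the zero section), which is conic and perfectly consistent with~\eqref{hyp:4}; its microsupport is all of $T^*_M(T^*_M X)$, so $i_M$ is as characteristic as possible. More fundamentally, Theorem~\ref{th:invim} compares $\opb{f}$ with $\epb{f}$ for a \emph{single} map and is not the tool for identifying $\opb{g}\epb{i_M}F$ with $\epb{i_N}\opb{h}F$ across a Cartesian square. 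The identification you need is unconditional for conic sheaves and does not use~\eqref{hyp:4} at all: for a conic sheaf $G$ on a vector bundle $\pi\colon E\to X$ with zero section $s$ one has $\epb{s}G\simeq\reim{\pi}G$ (this is the conic-sheaf identity underlying the Fourier--Sato transform, cf.~\cite{KS90}); applying this on both sides, the zero-section isomorphism reduces to the base-change identity $\opb{(g|_N)}\reim{\pi_M}\simeq\reim{\pi_N}\opb{g_{N\pi}}$ for the Cartesian square of projections, which holds without any hypothesis. Replacing your appeal to Theorem~\ref{th:invim} and non-characteristicity by this argument makes the proof complete.
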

\begin{proof}
(i) Set for short $F=\rhom[\shd_X](\shm,\shc_M)$, $j=g_{N\pi}$ , $A=\lambda$, $B=\opb{j}A$. With these new notations, we have to prove the morphism 
\eq\label{eq:4c}
&&\opb{j}\rsect_AF\isoto\rsect_B\opb{j}F
\eneq
is an isomorphism. 

\spa
(ii) The morphism~\eqref{eq:4c} is an isomorphism outside of the zero-section of $T^*_MX$ since then 
$\supp(F)=A\sqcup C$ with $A$ and $C$ closed and $A\cap C=\varnothing$, by the hypothesis~\eqref{hyp:4}.

\spa
(iii) Consider the diagram in which $s_N$ and $s_M$ denote the embeddings of the zero-sections:
\eq\label{diag:2}
&&\xymatrix{
N\times_MT^*_MX\ar@<-0.5ex>[d]_-{\pi_N}\ar[r]^-j&T^*_MX\ar@<-0.5ex>[d]_-{\pi_M}\\
N\ar[r]^-j\ar@<-0.5ex>[u]_-{s_N}&M\ar@<-0.5ex>[u]_-{s_M}.
}\eneq
Since $\roim{\pi_N}\simeq\opb{s_N}$, when applied to conic sheaves, it remains to show that~\eqref{eq:4c} is an isomorphism after applying the functor $\roim{\pi_N}$.

\spa
(iv) Consider the morphism of Sato's distinguished triangles:
\eqn
&&\xymatrix{
\reim{\pi_N}\opb{j}\rsect_AF\ar[r]\ar[d]^-u&\roim{\pi_N}\opb{j}\rsect_AF\ar[r]\ar[d]^-v
              &{\dot\pi}_{N*}\opb{j} \rsect_AF\ar[r]^-{+1}\ar[d]^-w&\\
\reim{\pi_N}\rsect_B\opb{j}F\ar[r]&\roim{\pi_N}\rsect_B\opb{j}F\ar[r]&{\dot\pi}_{N*}\rsect_B\opb{j}F\ar[r]^-{+1}&
}\eneqn
It follows from (ii) that the vertical arrow $w$ on the right is an isomorphism. We are thus reduced to prove the isomorphism
\eq\label{eq:4d}
&&\reim{\pi_N}\opb{j}\rsect_AF\isoto\reim{\pi_N}\rsect_B\opb{j}F.
\eneq

\spa
(v) Using the fact that $A\supset M$ and $B\supset N$ and that Diagram~\eqref{diag:2} with the arrows going down  is Cartesian, we get
\eqn
\reim{\pi_N}\opb{j}\rsect_AF&\simeq&\opb{j}\reim{\pi_M}\rsect_AF\simeq\opb{j}\epb{s_M}\rsect_AF\\
&\simeq&\opb{j}\epb{s_M}F\simeq \opb{j}\reim{\pi_M}F\simeq \reim{\pi_N}\opb{j}F\\
&\simeq&\epb{s_N}\opb{j}F\simeq \epb{s_M}\rsect_B\opb{j}F \\
&\simeq&\reim{\pi_N}\rsect_B\opb{j}F .
\eneqn
\end{proof}
Consider 
\eq\label{eq:conegamma}
&&\gamma\subset T_NL\mbox{ an open convex cone such that $\ol\gamma$ contains the zero-section $N$}
\eneq
and recall notations~\eqref{eq:sectmu} and~\eqref{eq:sectnu}.

\begin{theorem}[{Wick isomorphism Theorem}] \label{th:wick}
Let $\shm$ be a coherent left $\shd_X$-module and let $\gamma$ be as in~\eqref{eq:conegamma}. 
Assume~\eqref{hyp:1},~\eqref{hyp:2},~\eqref{hyp:4} and also
\eq\label{eq:hyp:5}
&&\opb{g_{N\pi}}(\lambda)=\opb{g_{Nd}}(\gamma^{\circ a}).
\eneq
Then one has the commutative diagram   in which the horizontal arrow is an isomorphism:
\eq\label{eq:wick2}
&&\xymatrix@C=3ex@R=3.5ex{
\rhom[\shd_X](\shm,\shb_{M,\lambda})\vert_N\ar[rr]^-\sim\ar[dr]_-{\rho_{MN}} 
       &&\rhom[\shd_X](\shm,\sect_\gamma\shb_{NL})\ar[ld]^-{b_{\gamma,N}}\\
       &\rhom[\shd_X](\shm,\tw\shb_N)\,[d]&
}\eneq
\end{theorem}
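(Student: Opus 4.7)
The plan is to match the two sides of~\eqref{eq:wick2} by expressing each as a pushforward to $N$ of a common microlocal object on $T^*_N L$, using Theorem~\ref{th:671D} as the bridge. Commutativity of the triangle then follows formally from Lemma~\ref{le:wickcompatible}.

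Set $G = \rhom[\shd_X](\shm, \shc_M)$ and $F = \rhom[\shd_X](\shm, \sho_X)$. For the LHS, from $\shb_{M,\lambda} = \roim{\pi_M}\rsect_\lambda \shc_M$, restriction to $N$ gives $\rhom[\shd_X](\shm, \shb_{M,\lambda})\vert_N \simeq \opb{g}\roim{\pi_M}\rsect_\lambda G$. Base change along the Cartesian square in Diagram~\eqref{diag:2} (valid on conic sheaves, for which $\roim{\pi_M}$ coincides with restriction to the zero-section) gives $\roim{\pi_N}\opb{g_{N\pi}}\rsect_\lambda G$. Lemma~\ref{le:4b}, for which hypothesis~\eqref{hyp:4} is the input, commutes $\opb{g_{N\pi}}$ with $\rsect_\lambda$, and hypothesis~\eqref{eq:hyp:5} replaces $\opb{g_{N\pi}}\lambda$ by $\opb{g_{Nd}}(\gamma^{\circ a})$. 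Factoring $\pi_N = \pi_N^L \circ g_{Nd}$ and using the base-change identity $\reim{g_{Nd}}\rsect_{\opb{g_{Nd}}(\gamma^{\circ a})} \simeq \rsect_{\gamma^{\circ a}}\reim{g_{Nd}}$ (from the Cartesian pullback of $\gamma^{\circ a}\hookrightarrow T^*_N L$), Theorem~\ref{th:671D} rewrites $\reim{g_{Nd}}\opb{g_{N\pi}}G$ as $\mu_N(\omega_{L/N} \tens \opb{g}F)$, so the LHS becomes $\roim{\pi_N^L}\rsect_{\gamma^{\circ a}}\mu_N(\omega_{L/N} \tens \opb{g}F)$.

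For the RHS, formula~\eqref{eq:nuandmu2} applied with $L$ in place of $M$ and $\delta = \gamma^{\circ a}$ gives $\sect_\gamma\shb_{NL} \simeq \roim{\pi_N^L}\rsect_{\gamma^{\circ a}}(\mu_N\shb_L) \tens \omega_{L/N}$. Applying $\rhom[\shd_X](\shm, \cdot)$, ellipticity~\eqref{hyp:1} invoked through Corollary~\ref{cor:ell} (which yields $\opb{g}F \tens \omega_{L/X} \simeq \epb{g}F$) combined with $\shb_L \simeq \rsect_L\sho_X \tens \omega_L$ allows me to identify $\rhom[\shd_X](\shm, \mu_N\shb_L) \tens \omega_{L/N}$ with $\mu_N(\omega_{L/N} \tens \opb{g}F)$ after orientation-sheaf bookkeeping. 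The two expressions then coincide, producing the top horizontal isomorphism in~\eqref{eq:wick2}. The compatibility with the boundary value morphisms $\rho_{MN}$ and $b_{\gamma,N}$ follows from Lemma~\ref{le:wickcompatible}, since both are obtained from the natural augmentations $\rsect_\lambda \to \id$ and $\rsect_{\gamma^{\circ a}} \to \id$ by pushing forward to $N$, and Lemma~\ref{le:wickcompatible} asserts that the Wick isomorphism of Theorem~\ref{th:671} respects these.

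The main obstacle in this program is reconciling $\roim{\pi_N}$ with $\roim{\pi_N^L}\reim{g_{Nd}}$, which requires verifying $\roim{g_{Nd}} \simeq \reim{g_{Nd}}$ on the conic sheaf $\rsect_{\opb{g_{Nd}}(\gamma^{\circ a})}\opb{g_{N\pi}}G$. This should follow from the convex conic structure of $\gamma^{\circ a}$, the conic nature of the sheaves involved, and the vector bundle structure of $g_{Nd}$, combined with the control on the microsupport of $G$ provided by hypotheses~\eqref{hyp:1},~\eqref{hyp:2},~\eqref{hyp:4}; it is the real technical content behind the Wick rotation theorem.
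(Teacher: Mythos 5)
Your approach is essentially the same as the paper's: Theorem~\ref{th:671D} supplies the microlocal bridge, Lemma~\ref{le:4b} with hypotheses~\eqref{hyp:4} and~\eqref{eq:hyp:5} handles the cone substitution, formula~\eqref{eq:nuandmu2} converts $\sect_{\gamma^{\circ a}}\mu_N$ into $\sect_\gamma\shb_{NL}$, and Lemma~\ref{le:wickcompatible} gives commutativity of the triangle. The paper structures the computation in the opposite direction (it first applies Theorem~\ref{th:671D}, then replaces the cone, then pushes forward by $\roim{\pi_N}$), but the content is the same.

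The one place where you have a genuine gap is the concern you correctly flag at the end: the identification $\reim{g_{Nd}}\simeq\roim{g_{Nd}}$ on $\rsect_{\opb{g_{Nd}}(\gamma^{\circ a})}\opb{g_{N\pi}}G$, which is what allows $\roim{\pi_N} = \roim{\pi_N^L}\roim{g_{Nd}}$ to be replaced by $\roim{\pi_N^L}\reim{g_{Nd}}$. Your proposed explanation (convex conic structure of $\gamma^{\circ a}$, vector bundle structure of $g_{Nd}$) is not the actual reason. What is really needed is that $g_{Nd}$ is proper on $\opb{g_{N\pi}}\bigl(\supp\rhom[\shd_X](\shm,\shc_M)\bigr)$, and this follows from the ellipticity hypothesis~\eqref{hyp:1}: the pair $(L,\shm)$ being elliptic is exactly the non-characteristic condition (a) of Theorem~\ref{th:671}, which says that $g_d$ is proper on $\opb{g_\pi}\musupp\bigl(\Sol(\shm)\bigr) = \opb{g_\pi}\chv(\shm)$; restricting to conormals, $g_{Nd}$ is proper above $\opb{g_{N\pi}}\bigl(T^*_MX\cap\chv(\shm)\bigr)$, which contains the support of $\rhom[\shd_X](\shm,\shc_M)$. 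This properness is what the paper's proof invokes with the brief remark ``$g_{Nd}$ is proper on $\supp F$''; without it, your chain of identifications does not close, so this is a point that must be supplied, not just gestured at.
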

\begin{proof}
(i) As a particular case of Theorem~\ref{th:671D} and using the fact that 
$\opb{g}\rhom[\shd_X](\shm,\sho_X)\simeq\rhom[\shd_X](\shm,\shb_L)$, we get the  isomorphism
\eqn
&&\reim{g_{Nd}}\opb{g_{N\pi}}\rhom[\shd_X](\shm,\shc_M)\simeq \rhom[\shd_X](\shm,\mu_N\shb_L)\tens\omega_{L/N}.
\eneqn

\spa
(ii)  Set  for short $F=\rhom[\shd_X](\shm,\shc_M)$. Using Lemma~\ref{le:4b} and the fact that $g_{Nd}$ is proper on $\supp F$, we have the isomorphism
\eqn
\reim{g_{Nd}}\opb{g_{N\pi}}\rsect_{\lambda}F&\simeq&\reim{g_{Nd}} \rsect_{\opb{g_{N\pi}}(\lambda)}\opb{g_{N\pi}}F\\
&\simeq&\rsect_{\gamma^{\circ a}} \reim{g_{Nd}}\opb{g_{N\pi}}F.
\eneqn
Therefore, we have proved the isomorphism
\eq\label{eq:wick3}
&&\reim{g_{Nd}}\opb{g_{N\pi}}\rhom[\shd_X](\shm,\sect_{\lambda}\shc_M)
  \simeq \rhom[\shd_X](\shm,\sect_{\gamma^{\circ a}}\mu_N\shb_L)\tens\omega_{L/N}.
\eneq

\spa
(iii) Let us apply the functor $\roim{\pi_N}$ to~\eqref{eq:wick3}.  Since $g_{Nd}$ is proper on $\supp F$, 
setting $G= \rhom[\shd_X](\shm,\sect_{\lambda}\shc_M)$, we have  (see Diagram~\ref{diag:5})
\eqn
\roim{\pi_N}\reim{g_{Nd}}\opb{g_{N\pi}}G&\simeq& \roim{\pi}\opb{g_{N\pi}}G\\
 &\simeq& (\roim{\pi_M}G)\vert_N.
 \eneqn
 Hence, we have proved the isomorphism
\eqn
&&\rhom[\shd_X](\shm,\shb_{M,\lambda})\vert_N\simeq \rhom[\shd_X](\shm,\oim{\pi_N}\sect_{\gamma^{\circ a}}\mu_N\shb_L)\tens\omega_{L/N}
\eneqn
and the result follows from~\eqref{eq:nuandmu2}.
\end{proof}

\subsection{The classical Wick rotation}
Let us treat the classical Wick rotation. Hence, we assume that
$M=N\times\R$ and $L=N\times\sqrt{-1}\R$. As usual, $Y$ is a complexification of $N$ and $X=Y\times\C$. 
We denote by  $t+is$ the holomorphic coordinate on $\C$,  by $(t+is;\tau+i\sigma)$ the symplectic coordinates on $T^*\C$ and by $(x;i\eta)$ a point of $T^*_NY$. We identify $N$ and $N\times\{0\}\subset X$.

Let $P$ is a differential operator of order $m$, elliptic on $L$ and (weakly) hyperbolic on $M$ in the $\pm dt$ codirections. A typical example 
is the wave operator on a globally hyperbolic spacetime $N\times\R_t$.  Set
\eqn
&&L^+=N\times \{t+is;t=0, s>0\},\quad \lambda=T^*_NY\times\{(t+is;\tau+i\sigma);s=0,\tau=0,\sigma\leq0\}.
\eneqn
The map $g_{Nd}\cl N\times_MT^*_MX\to T^*_NL$ is given by
\eqn
&& (x,0;i\eta,i\sigma)\mapsto (x;\sigma).
\eneqn
We shall apply the preceding result with $\gamma=L^+$. In that case, $\gamma^{\circ a}= \lambda$
and~\eqref{eq:hyp:5} is satisfied.

Let $\shm=\shd_X/\shd_X\cdot P$. In the sequel we write for short $\shb_M^P$ instead of $\hom[\shd_X](\shm,\shb_M)$ and similarly with other sheaves. Note that $\ext[\shd_X]{1}(\shm,\tw\shb_N)\simeq \tw\shb_N/P\cdot\tw\shb_N$.

As a particular case of Theorem~\ref{th:wick}, we get:
\begin{corollary}\label{cor:wick}
We have a commutative diagram in which the horizontal arrow is an isomorphism:
\eqn
&&\xymatrix@C=3ex@R=3.5ex{
\shb^P_{M,\lambda}\vert_N\ar[dr]_-{\rho}\ar[rr]^-\sim &&\shb^P_{L^+}\vert_N\ar[ld]^-{b}\\
&\tw\shb_N/P\cdot\tw\shb_N\ar[r]_-\sim&\shb_N^m.
}\eneqn
\end{corollary}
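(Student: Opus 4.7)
The plan is to specialize Theorem~\ref{th:wick} to the concrete setup of the corollary, verify its four hypotheses, and then identify the vertex $\shb_N^m$ of the triangle via the Cauchy-Kowalevska theorem applied to $Y\hookrightarrow X$. The only mildly delicate step is the choice of cones $\lambda,\lambda'$ needed for hypothesis~\eqref{hyp:4}: the stated $\lambda$ is a full half-space, so $\lambda'$ cannot be chosen as the opposite half-space and must instead be the closure of one half of the characteristic variety.

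For the hypotheses: \eqref{hyp:1} is the assumed ellipticity of $P$ on $L$. Since $M=N\times\R_t$, the conormal $T^*_NM$ is generated by $\pm dt$, so the weak hyperbolicity of $P$ in these codirections is \eqref{hyp:2}; then \eqref{hyp:3} follows from the implication recorded just before Theorem~\ref{th:671D}. For~\eqref{hyp:4}, weak hyperbolicity forces the roots in $\sigma$ of $\sigma(P)$, evaluated on covectors of $T^*_MX$, to be real, so near $N$ the variety $\chv(\shm)\cap T^*_MX$ splits into a non-positive-$\sigma$ and a non-negative-$\sigma$ piece meeting only at the zero section; one takes the stated $\lambda=\{\sigma\leq 0\}$ (which contains the non-positive piece) and lets $\lambda'$ be the closure of the non-negative-$\sigma$ part of $\chv(\shm)\cap T^*_MX$. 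Condition~\eqref{eq:hyp:5} is a direct computation from the explicit formulas for $g_{N\pi}$ and $g_{Nd}$: with $\gamma=L^+$ one has $\gamma^{\circ a}=\{\sigma\leq 0\}\subset T^*_NL$, and both $\opb{g_{N\pi}}(\lambda)$ and $\opb{g_{Nd}}(\gamma^{\circ a})$ equal $\{(x;i\eta,i\sigma):\sigma\leq 0\}\subset N\times_MT^*_MX$.

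With the hypotheses verified, Theorem~\ref{th:wick} supplies the commutative triangle and the horizontal isomorphism $\shb^P_{M,\lambda}|_N\isoto\shb^P_{L^+}|_N$ of the corollary. To identify the vertex $\shb_N^m$, use the free resolution $0\to\shd_X\to[\cdot P]\shd_X\to\shm\to 0$: the complex $\rhom[\shd_X](\shm,\tw\shb_N)[1]$ is represented by $[\tw\shb_N\to[P]\tw\shb_N]$ placed in degrees $-1$ and $0$, and by the lemma of Subsection~\ref{subsection:bv} (with $d=1$) it is quasi-isomorphic to $\rhom[\shd_Y](\dopb f\shm,\shb_N)$. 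Since~\eqref{hyp:3} holds, Theorem~\ref{th:CKK} together with Weierstrass division gives $\dopb f\shm\simeq\shd_Y^m$ locally, so the latter $\rhom$ reduces to $\shb_N^m$ concentrated in degree~$0$. This yields simultaneously the vanishing $\hom[\shd_X](\shm,\tw\shb_N)=0$ and the isomorphism $\tw\shb_N/P\cdot\tw\shb_N\isoto\shb_N^m$; commutativity of the triangle in the corollary is inherited from that in Theorem~\ref{th:wick}.
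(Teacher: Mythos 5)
Your route is the paper's own: the corollary is obtained by specializing Theorem~\ref{th:wick} to $M=N\times\R$, $L=N\times\sqrt{-1}\R$, $\gamma=L^+$, checking~\eqref{eq:hyp:5} from the explicit formulas for $g_{N\pi}$ and $g_{Nd}$, and identifying the bottom vertex via the lemma of Subsection~\ref{subsection:bv} together with $\dopb{f}\shm\simeq\shd_Y^m$ (Weierstrass division); the paper gives no further argument, and your identification $\tw\shb_N/P\cdot\tw\shb_N\isoto\shb_N^m$ and the induced commutativity are fine.

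There is, however, a genuine flaw in your verification of~\eqref{hyp:4}. You assert that weak hyperbolicity in the $\pm dt$ codirections alone forces $\chv(\shm)\cap T^*_MX$ near $N$ to split into a $\{\sigma\le 0\}$ piece and a $\{\sigma\ge 0\}$ piece meeting only at the zero section. Realness of the roots in $\sigma$ does not give the ``meeting only at the zero section'' part: take $P=\partial_t^2-\partial_{x_1}^2$ on $M=N\times\R$ with $\dim N\ge 2$. This operator is weakly hyperbolic in $\pm dt$, yet $\chv(\shm)\cap T^*_MX=\{\sigma^2=\eta_1^2\}$ contains points with $\sigma=0$ and $\eta\neq 0$, and such points lie in the closure of the $\{\sigma>0\}$ part; hence any closed cone $\lambda'$ containing that part meets $\lambda=\{\sigma\le 0\}$ outside the zero section, and~\eqref{hyp:4} fails for this $\lambda$. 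What rescues the corollary is~\eqref{hyp:1}, which you did not invoke at this step: the covectors $(x,0;\sqrt{-1}\eta,0)$ with $\eta\neq 0$ belong to $T^*_LX$, so ellipticity on $L$ excludes them from $\chv(\shm)$, and by homogeneity and compactness of the sphere bundle one gets $|\sigma|\ge c|\eta|$ on $\chv(\shm)\cap T^*_MX$ over a neighborhood of $N$; with this, your choice $\lambda'=\chv(\shm)\cap T^*_MX\cap\{\sigma\ge 0\}$ does satisfy~\eqref{hyp:4} (note that in the example above $P$ is not elliptic on $L$, so there is no contradiction). Once this point is corrected, the rest of your argument goes through.
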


\providecommand{\bysame}{\leavevmode\hbox to3em{\hrulefill}\thinspace}
\begin{bibdiv}
\begin{biblist}

\bib{GW17}{article}{
author={G\'erard, Christian},
author={Wrochna, Michal},
title={Analytic Hadamard states, Calder{\'o}n projectors and Wick rotation near analytic Cauchy surfaces},
eprint={arXiv:1706.08942},
}

\bib{JS16}{article}{
author={Jubin, Beno\^it},
author={Schapira, Pierre},
title={Sheaves and D-modules on causal manifolds},
journal={Letters in Mathematical Physics},
volume={16},
date={2016},
pages={607-648}
}

\bib{Ka70} {book}{
author={Kashiwara, Masaki},
title={Algebraic study of systems of partial differential equations},
note={Translated from author's thesis, Tokyo 1970},
language={Japanese}
publisher= {Soc. Math. France},
series={M{\'e}moires SMF},
volume={63},  
year={1995},
 page={xiii+72}
}

\bib{Ka03}{book}{
   author={Kashiwara, Masaki},
   title={$D$-modules and microlocal calculus},
  book={ series={Translations of Mathematical Monographs},
   volume={217},
   publisher={American Mathematical Society, Providence, RI},}
   date={2003},
   pages={xvi+254},
}

\bib{KS90}{book}{
 author={Kashiwara, Masaki},
 author={Schapira, Pierre},
title={Sheaves on manifolds},
 series={Grundlehren der Mathematischen Wissenschaften},
 volume={292},
 publisher={Springer-Verlag, Berlin},
 date={1990},
 pages={x+512},
}

\bib{Sa60} {article}{
author={Sato, Mikio},
title={Theory of hyperfunctions,  I \& II},
journal={Journ. Fac. Sci. Univ. Tokyo} ,
volume={8},
pages={139--193, 387--437},
year={1959, 1960}
}

\bib{Sa70} {book}{
author={Sato, Mikio},
title={Regularity of hyperfunctions solutions of partial differential equations},
publisher={Actes du Congrs International des MathŽmaticiens, Gauthier-Villars, Paris},
volume={2},
pages={785--794},
year={1970}
}

\bib{SKK73}{article}{
 author={Sato, Mikio},
 author={Kawai, Takahiro},
 author={Kashiwara, Masaki},
 title={Microfunctions and pseudo-differential equations},
 conference={
 title={Hyperfunctions and pseudo-differential equations (Proc. Conf., Katata, 1971; dedicated to the memory of Andr\'e Martineau)},
 }
 book={publisher={Springer, Berlin}}
 date={1973},
 pages={265--529. Lecture Notes in Math., Vol. 287}
}

\bib{Sc13}{article}{
 author={Schapira, Pierre},
 title={Hyperbolic systems and propagation on causal manifolds},
 journal={Lett. Math. Phys.},
 volume={103},
 date={2013},
 number={10},
 pages={1149--1164},
 eprint={arXiv:1305.3535},
}

\end{biblist}
\end{bibdiv}

\vspace*{1cm}
\noindent
\parbox[t]{21em}
{\scriptsize{
Pierre Schapira\\
Sorbonne Universit{\'e}s, UPMC Univ Paris 06\\
Institut de Math{\'e}matiques de Jussieu\\
e-mail: pierre.schapira@imj-prg.fr\\
http://webusers.imj-prg.fr/\textasciitilde pierre.schapira/
}}
\end{document}